\newtheorem{theorem}{Theorem}
\newtheorem{lemma}{Lemma}
\newtheorem{corollary}{Corollary}
\newtheorem{definition}{Definition}
\newtheorem{proposition}{Proposition}
\theoremstyle{definition}
\newtheorem{example}{Example}
\theoremstyle{remark}
\newcommand{\OO}{\mathcal O}
\newcommand{\GG}{\mathcal G}
\newcommand{\EE}{\mathcal E}
\newcommand{\VV}{\mathcal V}
\newcommand{\MM}{\mathcal M}
\newcommand{\WW}{\mathcal W}
\newcommand{\LV}{\mathcal{L}(\mathcal V)}
\newcommand{\GL}{\mathcal{G\>\!\!L}}
\newcommand{\bR}{\overline{\mathbb{R}}}
\newcommand{\bx}{\bar x}
\newcommand{\AutV}{\operatorname{Aut}(\VV)}
\newcommand{\DerV}{\operatorname{Der}(\VV)}
\newcommand{\wpartial}{\widehat{\partial}}
\newcommand{\abs}[1]{\left\vert #1 \right\vert}
\newcommand{\norm}[1]{\left\Vert #1 \right\Vert}
\newcommand{\Norm}[1]{\left\vert \mkern -2mu \left\Vert #1 \right\Vert \mkern -2mu \right\vert}
\newcommand{\ip}[2]{\left< #1, #2 \right>}
\newcommand{\set}[2]{\left\lbrace #1 \, : \, #2 \right\rbrace}
\newcommand{\Rn}{\mathbb{R}^n}
\begin{document}

\title{Commutation principles for nonsmooth variational problems on Euclidean Jordan algebras}

\author{Juyoung Jeong\footnote{Department of Mathematics, Soongsil University, Seoul 06978, Republic of Korea. Partially supported by the National Research Foundation of Korea NRF-2021R1C1C2008350.} \and David Sossa\footnote{Universidad de O'Higgins, Instituto de Ciencias de la Ingenier\'ia, Av.\,Libertador Bernardo O'Higgins 611, Rancagua, Chile (e-mail: david.sossa@uoh.cl). Partially supported by  FONDECYT (Chile)  through grant 11220268, and MATH-AMSUD 23-MATH-09 MORA-DataS project.}    } 

\date{\today}

\maketitle

\bigskip
\bigskip

\begin{quote}{\small 
    \textbf{Abstract}. 
    
    The commutation principle proved by Ram\'irez, Seeger, and Sossa (SIAM J Optim 23:687–694, 2013) in the setting of Euclidean Jordan algebras says that for a Fr\'echet differentiable function $\Theta$ and a spectral function $F$, any local minimizer or maximizer $a$ of $\Theta+F$ over a spectral set $\EE$ operator commutes with the gradient of $\Theta$ at $a$. In this paper, we improve this commutation principle by allowing $\Theta$ to be nonsmooth. For example, for the case of local minimizer, we show that $a$ operator commutes with some element of the limiting (Mordukhovich) subdifferential of $\Theta$ at $a$ provided that $\Theta$ is subdifferentially regular at $a$ satisfying a qualification condition. For the case of local maximizer, we prove that $a$ operator commutes with each element of the (Fenchel) subdifferential of $\Theta$ at $a$ whenever this subdifferential is nonempty. As an application, we characterize local optimizers of shifted strictly convex spectral functions and norms over automorphism invariant sets.
    
    \bigskip
    
    {\it Mathematics Subject Classification}:  17C99, 17C30, 49J52, 90C56, 90C99 
    
    {\it Keywords}:  Euclidean Jordan algebra, automorphism invariance, commutation principle, nonsmooth analysis, strictly convex norm.
}\end{quote}
\bigskip
\bigskip

\section{Introduction}

The commutation principle in Euclidean Jordan algebras established by Ram\'irez, Seeger and Sossa \cite{RSS} has been shown to be a useful tool for analyzing several variational problems where spectral sets and spectral functions are involved (see Section\,\ref{preliminaries} for definitions and main properties concerning the elements of Euclidean Jordan algebras). This principle reads as follows.
\begin{theorem}\label{th:first}
    Let $\mathcal V$ be a Euclidean Jordan algebra and let $\Theta : \VV \to \mathbb{R}$ be a Fr{\'e}chet differentiable function. Let $F : \VV \to \mathbb{R}$ be a spectral function and let $\mathcal{E} \subseteq \VV$ be a spectral set. If $a$ is a local minimizer/maximizer of the map
    \begin{equation}\label{map0}
     x \in \mathcal{E} \mapsto \Theta(x) + F(x), 
     \end{equation}
    then $a$ and $\nabla \Theta(a)$ operator commute.
\end{theorem}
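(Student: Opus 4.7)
My strategy is to promote the local extremum condition to an orthogonality condition against every derivation of $\VV$, by exploiting the $\AutV$-invariance of $\EE$ and $F$, and then to translate that condition into operator commutativity of $a$ and $\nabla\Theta(a)$ via an inner-product manipulation involving commutators of Jordan multiplication operators.

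\textbf{Step 1 (admissible variations from automorphisms).} I would first recall the standard facts that spectral sets and spectral functions are $\AutV$-invariant: $\phi(\EE) = \EE$ and $F \circ \phi = F$ for every $\phi \in \AutV$. Given any $D \in \DerV$, the one-parameter subgroup $\phi_t := \exp(tD)$ lies in $\AutV$, so $\phi_t(a) \in \EE$ and $F(\phi_t(a)) = F(a)$ for all $t \in \mathbb R$, with $\frac{d}{dt}\big|_{t=0}\phi_t(a) = Da$. This gives a rich family of admissible first-order variations of $a$ along which the $F$ part of the objective is frozen.

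\textbf{Step 2 (first-order condition).} Local optimality of $a$ for $\Theta + F$ on $\EE$ forces $t \mapsto \Theta(\phi_t(a)) + F(a)$ to have a local extremum at $t=0$. Differentiating via the chain rule yields
\[ \ip{\nabla\Theta(a)}{Da} = 0 \quad \text{for every } D \in \DerV. \]

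\textbf{Step 3 (commutator derivations and conclusion).} Writing $b := \nabla\Theta(a)$, I would invoke the classical consequence of the Jordan identity that $[L_u, L_v] := L_u L_v - L_v L_u \in \DerV$ for all $u, v \in \VV$. Specializing Step~2 to $D = [L_u, L_v]$ and shuttling the self-adjoint Jordan multiplications $L_u, L_v$ across the trace inner product, I expand
\[ 0 = \ip{b}{[L_u, L_v] a} = \ip{u \circ b}{v \circ a} - \ip{v \circ b}{u \circ a} = \ip{u}{[L_b, L_a] v}. \]
Since $u, v \in \VV$ are arbitrary, this forces $[L_a, L_b] = 0$, which is precisely the definition of $a$ operator commuting with $\nabla\Theta(a)$.

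\textbf{Main obstacle.} There is no truly hard step once one has the two structural inputs (i) $\phi(\EE) = \EE$ and $F \circ \phi = F$ for $\phi \in \AutV$, and (ii) $[L_u, L_v] \in \DerV$. The only technical point is the bookkeeping in Step~3 that recasts $\ip{b}{[L_u, L_v] a}$ as $\ip{u}{[L_b, L_a] v}$, which repeatedly uses self-adjointness of $L_u$, $L_v$ with respect to the trace inner product. The conceptual weight of the theorem is absorbed into the $\AutV$-invariance of the spectral data, which furnishes the curves $\phi_t(a)$ that make the variational argument possible without any smoothness assumption on $F$.
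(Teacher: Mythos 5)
Your proof is correct, and it is essentially the ``summarized proof'' of Gowda--Jeong that the paper cites for Theorem~\ref{th:first}; the paper itself does not re-prove this result, its own work being the nonsmooth generalizations (Theorems~\ref{th:mainmax} and \ref{th:main}). Still, it is worth contrasting your route with the paper's machinery. You differentiate along the one-parameter subgroups $\exp(tD) \subset \AutV$ to extract the variational inequality $\ip{\nabla\Theta(a)}{Da}=0$ for every $D\in\DerV$, and then conclude by specializing $D$ to commutators $[L_u,L_v]$ and unwinding the identity $\ip{b}{[L_u,L_v]a}=\ip{u}{[L_b,L_a]v}$ (the paper's equation \eqref{Lyapunov}). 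The paper instead rephrases local optimality of $a$ as local optimality of the identity $I$ for $X\mapsto\Theta(Xa)$ over the manifold $\AutV$, then invokes Cartan's closed subgroup theorem to see $\AutV$ is a smooth submanifold, Proposition~\ref{normmanifold} to identify $N(I;\AutV)=\DerV^\perp$, and finally Proposition~\ref{comder} (whose hard direction rests on Koecher's theorem that every derivation is a sum of commutators) to translate $v\otimes a\in\DerV^\perp$ into operator commutativity. For the smooth case your argument is strictly lighter: you only need the easy inclusion $[L_u,L_v]\in\DerV$, never Koecher's theorem, nor any normal-cone or manifold machinery. The heavier framework the paper builds is, of course, exactly what is needed once $\Theta$ is nonsmooth, since there is then no gradient to differentiate along curves, and one must instead appeal to subdifferential optimality conditions over $\AutV$.
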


In \cite{GJ2017}, highlighting the role of algebra automorphisms, Gowda and Jeong presented a different proof of the above result in deducing the operator commutative property. Furthermore, they showed that Theorem\,\ref{th:first} still holds if $\EE$ and $F$ are assumed to be weakly spectral (i.e., automorphism invariant) rather than (the stronger assumption of being) spectral.

An example where the commutation principle (the Gowda and Jeong's version) was successfully applied is, for instance, the work of Lourenço and Takeda \cite{Lourenco}, where they obtained formulae for the generalized subdifferentials of spectral functions. Recently, Sossa \cite{Sossa} also used this principle to obtain bounds for the determinant of the sum of two elements in terms of the product of the sum of their eigenvalues, known as a Fiedler-type determinantal inequality. 

Some other commutation principles analogous to Theorem\,\ref{th:first} were stated for other systems such as normal decomposition systems \cite{GJ2017,Niezgoda} and framed Euclidean spaces \cite{Seeger2013}. The commutation principle also inspired a general system called the Fan-Theobald-von Neumann system \cite{Gowda2019,Gowda2022,GJ2023}, which provides a framework to study in a unified manner where a notion of eigenvalue is defined.

The purpose of this work is to present a commutation principle analogous to Theorem\,\ref{th:first} but with $\Theta$ allowed to be nonsmooth. Nonsmooth functions appear in a wide class of optimization problems; see, for instance, the book of Bagirov et al. \cite{Bagirov} and references therein. In particular, functions involving the $k$th largest eigenvalue of a matrix (or of an element of a Euclidean Jordan algebra) are frequently nonsmooth. The following examples formulated in a Euclidean Jordan algebra $\VV$ illustrate this fact. 

\begin{example}\label{ex:kappa}
    Let $u$ be a nonzero element of $\VV$  such that it has nonnegative eigenvalues which we denote by $u\succeq 0$. The condition number of $u$ is defined as
    \[ \kappa(u) := \frac{\lambda_{\max}(u)}{\lambda_{\min}(u)}, \]
    where $\lambda_{\max}(u)$ and $\lambda_{\min}(u)$ denote the largest and smallest eigenvalue of $u$, respectively. In \cite{Seeger2022}, Seeger has studied the problem of perturbing a nonzero element $a\succeq 0$ in such a way as to reduce its condition number as much as possible. This problem can be formulated as follows.
    \begin{equation}\label{kappa}
    \left\{\begin{array}{ll}
         {\rm Minimize}& \kappa(x+a) \\
        \text{subject to} & x\in\EE,  
    \end{array}\right.
    \end{equation}
    where $\EE$ is some set of admissible perturbations, which is usually a spectral set. The objective function of \eqref{kappa} is nonconvex and nonsmooth. So, it fits in the class of problems that we are interested. 
\end{example}

\begin{example}
Let $F(\cdot)$ be a strictly convex spectral norm in $\VV$ (see Section\,\ref{applications} for definitions). For $b\in\VV$, let $\OO_b:=\{Xb:X\in\AutV\}$ be the automorphism orbit of $b$ with $\AutV$ being the set of automorphism in $\VV$. One can see that $\OO_b$ is a weakly spectral set. For $a\in\VV$ consider the problem
\begin{equation}\label{ex:norm}
    \left\{\begin{array}{ll}
         {\rm Minimize/Maximize}&  F (x-a) \\
       \text{subject to}& x\in\OO_b.  
    \end{array}\right.
    \end{equation}
    We observe that the objective function of \eqref{ex:norm} is convex but nonsmooth in general. Thus, \eqref{ex:norm} is also in the class of problems that this work is concerned. We point out that the minimization case of problem \eqref{ex:norm} was studied by Massey, Rios, and Stojanoff \cite{Massey} in the case of the space of Hermitian matrices which is a particular instance of a simple Euclidean Jordan algebra. These authors proved (for the simple algebra of Hermitian matrices) that any local solution (of the minimization case) of \eqref{ex:norm} strongly operator commute (see the definition below) with $a$, and therefore, they deduced that any local solution of \eqref{ex:norm} is a global solution.
\end{example}

Observe that problems \eqref{kappa} and \eqref{ex:norm} have a common structure since they consist of optimizing a shifted spectral function over a weakly spectral set. That is, they belong to the following class of problems:
   \begin{equation}\label{shifted}
    \left\{\begin{array}{ll}
         {\rm Minimize/Maximize}&  F (x-a) \\
       \text{subject to}& x\in\EE,  
    \end{array}\right.
    \end{equation}
where $a\in\VV$, $F:\VV\to\mathbb{R}$ is spectral, and $\EE$ is weakly spectral.

Recently, Gowda \cite{G2022} obtained a strong commutation principle for the maximization case of Theorem\,\ref{th:first} by allowing $\Theta$ to be nonsmooth but admitting a nonempty (restricted) subdifferential. More precisely, for $\Theta:\VV\to\mathbb{R}$ and $S\subseteq \VV$, the \emph{(Fenchel) subdifferential} of $\Theta$ at $a$ relative to $S$ is defined as
\begin{equation}\label{Ssubdif}
    \partial_S\Theta(a) := \set{v \in \VV}{\Theta(x) - \Theta(a) - \ip{v}{x-a} \geq 0 \;\text{ for all }\;x\in S}.
\end{equation}
Gowda's result says that under the assumptions on $\EE$ being a spectral set and $F$ being a spectral function, if $a$ is a global maximizer of the map \eqref{map0} and $\partial_\EE\Theta(a)\neq\emptyset$, then $a$ strongly operator commutes with every element in $\partial_\EE\Theta(a)$. That $a$ and $b$ strongly operator commute means they not only operator commute but also satisfy $\ip{a}{b} = \ip{\lambda(a)}{\lambda(b)}$, where $\lambda(\cdot)$ denotes the eigenvalue map. This remarkable result can be applied to many variational problems, especially when $\Theta$ turns out to be convex. However, the assumption that $\EE$ is spectral rather than weakly spectral is crucial in the Gowda's proof. Hence, this result can not be applied to problem \eqref{ex:norm} since in general $\mathcal O_b$ is weakly spectral but not spectral. Furthermore, if $a$ is a  minimizer of the map \eqref{map0}, the Gowda's technique cannot be used directly to deduce the (strong) operator commutativity between $a$ and some elements of $\partial_\EE\Theta(a)$ unless we have convexity assumptions over $\EE$ and $\Theta$ and by imposing $F=0$. Thus, problem \eqref{kappa} is not covered by the results presented in \cite{G2022}.

The main contributions of this article can be summarized as follows:
\begin{itemize}
    \item[-] In Theorem\,\ref{th:mainmax}, we improve Gowda's commutation principle by allowing the set $\EE$ and the function $F$ to be weakly spectral rather than only spectral. Furthermore, our result also covers the case that $a$ is a local maximizer. The price we pay is that we obtain just the operator commutativity of $a$ with the elements of $\partial_\EE\Theta(a)$ instead of the strong operator commutativity.

    \item[-] In Theorem\,\ref{th:main}, we establish a nonsmooth commutation principle for the minimization case. For instance, by assuming that $\Theta$ is Clarke regular (see Definition\,\ref{Clarke reg}), we show that if $a$ is a local minimizer of the map $x \in \mathcal{E} \mapsto \Theta(x) + F(x)$, then $a$ operator commutes with some element in the Clarke subdifferential of $\Theta$ at $a$. See Corollary\,\ref{cor:mainclarke}.  
    
    \item[-] As an application of our commutation principles, in Theorem\,\ref{th:comaAb} we prove that any local solution of \eqref{shifted} operator commutes with $a$ provided that $F$ is either a strictly convex function or a strictly convex norm.
\end{itemize}

The organization of the paper is as follows: In Section\,\ref{preliminaries} we describe the tools of nonsmooth analysis that we require in this work, and we provide the basic elements and results of Euclidean Jordan algebras. We also analyze in detail the limiting normal cone of a smooth manifold which is a requisite to link the optimality conditions of our optimization problems with the commutation properties of the involved elements. Nonsmooth commutation principles are stated and studied in Section\,\ref{sec:main}. We also discuss some special cases and we apply our results to the condition number minimization problem \eqref{kappa}. Finally, Section\,\ref{applications} is devoted to the analysis of the local solutions of the model \eqref{shifted} by means of our commutation principles provided that $F$ is either a strictly convex spectral function or a strictly convex spectral norm.

\section{Notations and mathematical backgrounds}\label{preliminaries}

Throughout this work, $\ip{\cdot}{\cdot}$ refers to an inner product of each finite dimensional real inner product space that we are considering, and $\norm{\,\cdot\,}$ is the induced norm. In particular, for a finite dimensional real inner product space $\VV$, the symbol $\LV$ stands for the vector space of linear maps from $\VV$ to $\VV$, and we assume that this space is equipped with the Frobenius inner product
\[ \ip{X}{Y} = \sum_{i=1}^m \sum_{j=1}^m \ip{c_i}{Xc_j}\ip{c_i}{Yc_j}, \]
where $\{c_1,\ldots,c_m\}$ is a fixed orthonormal basis of $\VV$. For $u,v\in\VV$, the tensor product $u \otimes v \in \LV$ is given by $(u \otimes v)x := \ip{v}{x} u$ for all $x\in\VV$. For $a\in\VV$, the adjoint of the linear map $\mathcal{A} : \LV \to \VV$ given by $\mathcal{A} (X)=Xa$ is
\begin{equation}\label{adjoint}
    \mathcal{A}^\ast (x)=x\otimes a\;\text{ for all }x\in\VV.
\end{equation}
Hence, $\ip{Xa}{x} = \ip{X}{x \otimes a}$ for all $x, a \in \VV$ and $X \in \LV$.

\subsection{Tools from nonsmooth variational analysis}\label{nosmooth}
In this section we review some elements of nonsmooth variational analysis. We refer to the books by Mordukhovich \cite{Mor2006a, Mor2006b, Mor2018}, and by Rockafellar and Wets \cite{Rock-Wets} where the reader can find full theory, vast bibliography, and commentaries on the subject.

Let $\VV$ be a finite dimensional real inner product space and let $f:\VV\to \bR := [-\infty, +\infty]$ be an extended real-valued function on $\VV$. 
Let $\bx \in \VV$ be such that $f(\bx)$ is finite. The following definitions of generalized subdifferentials are taken from \cite[Definition\,8.3]{Rock-Wets}. The \emph{Fr{\'e}chet subdifferential} of $f$ at $\bx$ is defined by
\begin{equation*}
    \wpartial f(\bx) := \set{v \in \mathcal V}{\liminf_{x \to \bx} \frac{f(x)-f(\bx) - \ip{v}{x-\bx}}{\Vert x-\bx \Vert} \geq 0}.
\end{equation*}
The \emph{limiting (Mordukhovich) subdifferential} of $f$ at $\bx$, written as $\partial f(\bx)$, is the set whose elements are vectors $v\in\VV$ for which there exist sequences $\{x^k\}$ and $\{v^k\}$ on $\VV$ such that 
\[ x^k \to \bx, \quad f(x^k) \to f(\bx), \quad v^k \in \wpartial f(x^k), \quad v^k\to v. \]
The \emph{horizon subdifferential} of $f$ at $\bx$, written as $\partial^{\infty}f(\bx)$, consists of all vectors $v\in \VV$ such that there exist sequences $\{x^k\}$, $\{v^k\}$ in $\VV$, and $\{t_k\}\subset \mathbb{R}_{+}$, where $\mathbb{R}_{+}$ denotes the set of positive real numbers, such that 
\[ x^k \to \bx, \quad f(x^k) \to f(\bx), \quad v^k \in \wpartial f(x^k), \quad t_k \downarrow 0, \quad t_kv^k \to v. \]

Let $C \subset \VV$ be a closed set and let $\bx\in C$. The \emph{(Bouligand) tangent cone} of $C$ at $\bx$, denoted by $\widehat{T}(\bx;\, C)$, consists of all vectors $v\in\VV$ for which there are sequences $\{x^k\} \subset C$ and $\{t^k\}\subset\mathbb{R}_+$ such that (cf. \cite[Definition\,6.1]{Rock-Wets})
\[ x^k \to \bx, \quad t_k \downarrow 0, \quad \frac{x^k-\bx}{t_k} \to v. \]
The \emph{Fr{\'e}chet normal cone} and the \emph{limiting normal cone} to $C$ at $\bx$ are respectively given by (cf. \cite[Definition\,6.3]{Rock-Wets})
\[ \widehat{N} (\bx;\, C) := \wpartial  \delta(\bx;\, C) \quad \text{and} \quad N(\bx;\, C) := \partial \delta(\bx;\, C), \]
where $\delta(\cdot;\, C)$ is the indicator function of $C$. It is known that $\widehat{N} (\bx;\, C)$ coincides with the polar cone of $\widehat{T}(\bx;\, C)$. That is,
\begin{equation}\label{tanchar}
    \widehat{N} (\bx;\, C) = \set{v \in \VV}{\ip{v}{w} \leq 0, \text{ for all } w \in \widehat{T}(\bx;\, C)}.
\end{equation}
The elements of $ N(\bx;\, C)$ are characterized as the vectors $v$ for which there exist sequences $\{x^k\} \subset C$ and $\{v^k\} \subset \VV$ such that $v^k \in \widehat{N} (x^k;\, C)$, $x^k\to\bx$, and $v^k\to v$. Therefore, $ \widehat{N} (\bx;\, C)\subseteq N(\bx;\, C).$

Next, we explain how the chain rule is obtained for these generalized subdifferentials. The chain rule is applicable under a regularity assumption, which is defined below, cf. \cite[Corollary\,8.10]{Rock-Wets}. The \emph{horizon cone} of a nonempty set $C\subseteq\VV$ is the closed cone $C^\infty$ consisting on vectors $x\in\VV$ for which there are sequences $\{x^k\}\subset C$ and $\{t_k\}\subset\mathbb{R}_+$ such that $t_k\downarrow 0$ and $t_k x^k\to x$ (cf. \cite[Definition\,3.3]{Rock-Wets}). 
\begin{definition}\label{def:reg}
A function $f:\VV\to\bR$ is \emph{(subdifferentially) regular} at $\bx\in\VV$ if $f(\bx)$ is finite, $f$ is lower semicontinuous around $\bx$, and
\[ \partial f(\bx) = \wpartial  f(\bx) \neq \emptyset, \quad \partial^\infty f(\bx) = \left(\wpartial  f(\bx)\right)^\infty. \]
\end{definition}
The above definition is obtained from (cf. \cite[Corollary\,8.11]{Rock-Wets}) with the following slight modification: for simplicity in the presentation, we are assuming that the condition $\partial f(\bx)\neq\emptyset$ is part of the definition of regularity.

We quote the simplest version of the chain rule which is enough for our purpose, cf. \cite[Theorem\,10.6]{Rock-Wets}.
\begin{proposition}\label{pr:chain}
    Let $\VV$ and $\WW$ be finite dimensional real inner product spaces. Let $f : \VV \to \bR$ be a function, $A:\WW\to\VV$ be a linear map, and $b\in\VV$. Define $h : \WW \to \bR$ by $h(y) = f(A y+b)$. For $\bar y\in \WW$, suppose that
    \begin{equation}\label{cond:chain}
        \text{$f$ is regular at $A\bar y+b$ \quad and \quad
         ${\rm Ker}(A^\ast)\cap \partial^\infty f(A\bar y+b)=\{0\}$.}
    \end{equation}
    Then, $h$ is regular at $\bar y$ and
    \[ \partial h(\bar y)=A^\ast\partial f(A\bar y+b) \quad \text{and} \quad \partial^\infty h(\bar y)=A^\ast\partial^\infty f(A\bar y+b). \]
\end{proposition}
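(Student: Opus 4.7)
The plan is to establish the inclusions $A^\ast \partial f(\bx) \subseteq \partial h(\bar y)$ and $A^\ast \partial^\infty f(\bx) \subseteq \partial^\infty h(\bar y)$ by a direct comparison of Fr{\'e}chet subgradient inequalities, and then to obtain the reverse inclusions by translating the problem into a statement about normal cones to epigraphs and invoking a chain rule for preimages under affine maps whose qualification condition is exactly the one assumed. Throughout I write $\bx := A\bar y + b$ and $L := \norm{A}$.

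For the easy direction, fix $v \in \wpartial f(\bx)$. Given $\epsilon > 0$, the defining liminf inequality furnishes a neighborhood of $\bx$ on which $f(x) - f(\bx) - \ip{v}{x - \bx} \geq -\epsilon\,\norm{x - \bx}$. Substituting $x = Ay + b$ and using $\norm{A(y - \bar y)} \leq L\,\norm{y - \bar y}$ yields $h(y) - h(\bar y) - \ip{A^\ast v}{y - \bar y} \geq -\epsilon L\,\norm{y - \bar y}$ on a neighborhood of $\bar y$; as $\epsilon$ is arbitrary, $A^\ast v \in \wpartial h(\bar y)$. Passing to limits of Fr{\'e}chet subgradients, and invoking the scaled-sequence definition of horizon subgradients, promotes this to $A^\ast \partial f(\bx) \subseteq \partial h(\bar y)$ and $A^\ast \partial^\infty f(\bx) \subseteq \partial^\infty h(\bar y)$.

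For the reverse inclusions I would work with epigraphs, using the standard equivalences $v \in \partial f(\bx) \Leftrightarrow (v, -1) \in N(\mathrm{epi}\, f;\, (\bx, f(\bx)))$ and $v \in \partial^\infty f(\bx) \Leftrightarrow (v, 0) \in N(\mathrm{epi}\, f;\, (\bx, f(\bx)))$, together with the corresponding characterizations for $h$. Introduce the affine map $\Phi : \WW \times \mathbb R \to \VV \times \mathbb R$ defined by $\Phi(y, r) := (Ay + b,\, r)$, whose adjoint is $\Phi^\ast(u, \tau) = (A^\ast u,\, \tau)$. Since $\mathrm{epi}\, h = \Phi^{-1}(\mathrm{epi}\, f)$, the normal-cone chain rule for preimages under smooth maps gives
\[
N(\mathrm{epi}\, h;\, (\bar y, h(\bar y))) \subseteq \Phi^\ast\, N(\mathrm{epi}\, f;\, (\bx, f(\bx))),
\]
provided the qualification condition: every $(u, \tau) \in N(\mathrm{epi}\, f;\, (\bx, f(\bx)))$ with $\Phi^\ast(u, \tau) = 0$ satisfies $(u, \tau) = 0$. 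As $\Phi^\ast(u, \tau) = (A^\ast u, \tau)$, such an element must have $\tau = 0$, so the relevant $(u, 0)$'s correspond precisely to $u \in \partial^\infty f(\bx)$; the qualification thus reduces to $\ker A^\ast \cap \partial^\infty f(\bx) = \{0\}$, which is our hypothesis. Reading off the $(v, -1)$ and $(v, 0)$ components of the resulting inclusion produces $\partial h(\bar y) \subseteq A^\ast \partial f(\bx)$ and $\partial^\infty h(\bar y) \subseteq A^\ast \partial^\infty f(\bx)$, completing the two equalities.

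Regularity of $h$ at $\bar y$ then follows by combining the equalities with the regularity of $f$. Since $f$ is regular at $\bx$ we have $\wpartial f(\bx) = \partial f(\bx) \neq \emptyset$, whence the chain $A^\ast \wpartial f(\bx) \subseteq \wpartial h(\bar y) \subseteq \partial h(\bar y) = A^\ast \partial f(\bx) = A^\ast \wpartial f(\bx)$ forces $\wpartial h(\bar y) = \partial h(\bar y)$, and this common set is nonempty. The horizon identity $\partial^\infty h(\bar y) = (\wpartial h(\bar y))^\infty$ is obtained analogously, using $\partial^\infty f(\bx) = (\wpartial f(\bx))^\infty$ together with the fact that $A^\ast$ commutes with the horizon operation on the sets involved, which in turn rests on the qualification condition. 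The main obstacle I expect is providing a self-contained justification of the normal-cone chain rule for $\Phi^{-1}(\mathrm{epi}\, f)$: one must show that limits of Fr{\'e}chet normals to $\mathrm{epi}\, h$ lift to limits of Fr{\'e}chet normals to $\mathrm{epi}\, f$ without losing compactness in the preimage, which is exactly what the condition $\ker A^\ast \cap \partial^\infty f(\bx) = \{0\}$ guarantees by precluding nontrivial horizontal normals lying in the kernel of $A^\ast$.
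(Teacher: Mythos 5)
The paper does not prove this proposition; it quotes it verbatim from Rockafellar and Wets \cite[Theorem 10.6]{Rock-Wets}, so there is no ``paper's proof'' to compare against. That said, your reconstruction follows exactly the route by which Theorem 10.6 is established in that reference: lift to epigraphs, observe $\mathrm{epi}\,h = \Phi^{-1}(\mathrm{epi}\,f)$ for the affine map $\Phi(y,r)=(Ay+b,r)$, invoke the normal-cone rule for inverse images (Rockafellar--Wets Theorem 6.14), and translate the qualification on $N\bigl(\mathrm{epi}\,f;(\bx,f(\bx))\bigr)\cap\ker\Phi^\ast$ back into $\ker A^\ast\cap\partial^\infty f(\bx)=\{0\}$ via the epigraphical characterization of $\partial^\infty f$. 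Your identification of the key obstacle --- that the constraint qualification is precisely what prevents loss of compactness when lifting limits of Fr\'echet normals through the preimage --- is also the right diagnosis.

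One step in the ``easy direction'' is misjustified as written. You correctly establish $A^\ast\wpartial f(\bx)\subseteq\wpartial h(\bar y)$ from the liminf inequality, but then claim that ``passing to limits of Fr\'echet subgradients'' promotes this to $A^\ast\partial f(\bx)\subseteq\partial h(\bar y)$. That passage fails in general: an element $v\in\partial f(\bx)$ arises as $v=\lim v^k$ with $v^k\in\wpartial f(x^k)$ for points $x^k\to\bx$ that need not lie on the affine subspace $\{Ay+b:y\in\WW\}$, so there is no way to produce preimages $y^k$ and hence no corresponding Fr\'echet subgradients of $h$. What saves you is the regularity hypothesis, which you in fact already have and use later: since $\partial f(\bx)=\wpartial f(\bx)$ and $\partial^\infty f(\bx)=\bigl(\wpartial f(\bx)\bigr)^\infty$, you can write $A^\ast\partial f(\bx)=A^\ast\wpartial f(\bx)\subseteq\wpartial h(\bar y)\subseteq\partial h(\bar y)$ directly, and similarly $A^\ast\partial^\infty f(\bx)=A^\ast\bigl(\wpartial f(\bx)\bigr)^\infty\subseteq\bigl(A^\ast\wpartial f(\bx)\bigr)^\infty\subseteq\bigl(\wpartial h(\bar y)\bigr)^\infty\subseteq\partial^\infty h(\bar y)$, with no limit-passing at all. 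Replacing ``passing to limits'' by ``invoking $\partial f(\bx)=\wpartial f(\bx)$'' closes the gap; the rest of the argument, including the horizon-cone compactness reasoning in the final paragraph, is sound.
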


The next result is a key tool for the proof of our Theorem\,\ref{th:main}. It provides necessary optimality conditions for nonsmooth optimization. Its proof is obtained by applying the Fermat's rule \cite[Proposition\,1.30]{Mor2018} and the subdifferential sum rule \cite[Theorem\,2.19]{Mor2018}.
\begin{theorem}\label{th:opt}
    Let $f:\VV\to\bR$ be a function and let $C$ be a closed subset of $\VV$. Let $\bx\in\VV$ be a local minimizer of the map $x\in C \mapsto f(x)$ and suppose that $f$ is lower semicontinuous around $\bx\in\VV$. Assume that the following qualification condition holds:
    \begin{equation}\label{qconstraint}                                               
        \partial^\infty f(\bx)\cap\left(-N(\bx;C)\right)=\{0\}.
    \end{equation}
    Then, 
    \begin{equation}\label{fermat}
        0\in \partial f(\bx)+N(\bx;C).
    \end{equation}
\end{theorem}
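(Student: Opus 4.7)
The plan is to absorb the constraint $x\in C$ into the objective via the indicator function and then apply Fermat's generalized rule together with a sum rule for the limiting subdifferential. Set $g:=f+\delta(\cdot;C)$. Since $f$ is lower semicontinuous around $\bx$ and $C$ is closed, $g$ is lower semicontinuous around $\bx$, and the hypothesis that $\bx$ is a local minimizer of $f$ over $C$ is equivalent to $\bx$ being a local (unconstrained) minimizer of $g$ on $\VV$. Fermat's rule then yields $0\in\wpartial g(\bx)\subseteq \partial g(\bx)$.

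Next I would invoke the standard sum rule for limiting subdifferentials (see e.g.\ \cite[Theorem\,10.49]{Rock-Wets}), applied with $f_1=f$ and $f_2=\delta(\cdot;C)$: if the qualification condition $\partial^\infty f(\bx)\cap(-\partial^\infty \delta(\bx;C))=\{0\}$ holds, then
\[
\partial g(\bx)\subseteq \partial f(\bx)+\partial \delta(\bx;C).
\]
By the very definition of the limiting normal cone, $\partial \delta(\bx;C)=N(\bx;C)$. For the horizon piece, because $\widehat{N}(\cdot;C)$ is a cone at each point, a direct inspection of the definitions of $\wpartial$ and $\partial^\infty$ applied to $\delta(\cdot;C)$ yields $\partial^\infty \delta(\bx;C)=N(\bx;C)$ as well: any $v\in N(\bx;C)$ can be realized by rescaling a sequence $w^k\to v$ with $w^k\in\widehat{N}(x^k;C)$ as $v^k:=k w^k\in\widehat{N}(x^k;C)$ with $t_k:=1/k\downarrow 0$ and $t_kv^k\to v$, and conversely. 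Consequently the qualification condition of the sum rule collapses to \eqref{qconstraint}, and combining the above gives
\[
0\in \partial g(\bx)\subseteq \partial f(\bx)+N(\bx;C),
\]
which is precisely \eqref{fermat}.

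The only point requiring any care is the identification $\partial^\infty \delta(\bx;C)=N(\bx;C)$ and the applicability of the limiting sum rule under the stated qualification; both are classical in variational analysis and reduce the proof to an essentially mechanical combination of Fermat's principle with the indicator-function reformulation.
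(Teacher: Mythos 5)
Your argument is correct, and since the paper does not prove Theorem~\ref{th:opt} but simply cites it (cf.\ \cite[Theorem\,4.3]{MNY}), there is no in-text proof to contrast it with; your reconstruction is in fact the standard route that source takes. Two remarks: the identity $\partial^\infty\delta(\bx;C)=\partial\delta(\bx;C)=N(\bx;C)$ that you verify by the rescaling argument is recorded directly in \cite[Exercise\,8.14]{Rock-Wets}, so you could cite it rather than rederive it; and the sum rule you invoke is stated in \cite[Corollary\,10.9]{Rock-Wets} (applied with $m=2$, where the qualification ``only the zero combination of horizon vectors sums to zero'' collapses to $\partial^\infty f(\bx)\cap(-\partial^\infty\delta(\bx;C))=\{0\}$, i.e.\ to \eqref{qconstraint}), so the reference ``Theorem\,10.49'' should be adjusted. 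Everything else, including the observation that $g:=f+\delta(\cdot;C)$ inherits lower semicontinuity from $f$ and the closedness of $C$, and the passage from local minimality of $g$ to $0\in\wpartial g(\bx)\subseteq\partial g(\bx)$ via Fermat's generalized rule, is accurate and complete.
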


Let us discuss the special case when $f : \VV \to \bR$ is locally Lipschitz around $\bx \in \VV$. The \emph{Clarke generalized directional derivative} of $f$ at $\bx$ in the direction $d \in \VV$ is defined as
\[ f^\circ(\bx; d) = \limsup_{x \to \bx,\; t \,\downarrow\, 0} \frac{f(x+td) - f(x)}{t}. \]
The \emph{Clarke subdifferential} of $f$ at $\bx$ is given by
\[ \partial^\circ f(\bx) := \set{v \in \VV}{f^\circ(\bx; d) \geq \ip{v}{d},\; \forall d \in \VV}. \]
It is known that (cf. \cite[Theorem\,8.6]{Rock-Wets})
\begin{equation}\label{incl}
    \wpartial f(\bx)\subseteq\partial f(\bx)\subseteq \partial^{\circ} f(\bx).
\end{equation}
\begin{definition} \label{Clarke reg}
    (\cite[Definition\,2.3.4]{Clarke}) We say that $f: \VV \to \mathbb{R}$ is \emph{Clarke regular} at $\bx\in\Rn$  if $f$ is locally Lipschitz around $\bx$ and if for every $d\in\Rn$, the ordinary directional derivative
    \[ f^\prime(\bx;d) := \lim_{t \,\downarrow\, 0} \frac{f(\bx+td)-f(\bx)}{t} \]
    exists and coincides with the generalized one, i.e., $f^\prime(\bx;d) = f^\circ(\bx;d)$.
\end{definition}
Since $\wpartial f(\bx) = \set{v \in \VV}{f^\prime(\bx;d) \geq \ip{v}{d},\;\forall d \in \VV}$ (cf. \cite[Chapter\,8]{Rock-Wets}), when $f$ is Clarke regular at $\bx$ we have that all the subdifferentials in \eqref{incl} coincide; i.e., $\wpartial f(\bx)=\partial f(\bx)= \partial^{\circ} f(\bx).$ From \cite[Theorem\,8.6]{Rock-Wets} we know that $ \big( \wpartial f(\bx) \big)^\infty \subseteq \partial^\infty f(\bx)$. Furthermore, because $f$ is locally Lipschitz around $\bx$ we have that $\partial^\infty f(\bx)=\{0\}$. Thus, when $f$ is Clarke regular at $\bx$ we have that $f$ is regular at $x$ (cf. Definition\,\ref{def:reg}), and the qualification condition \eqref{qconstraint} is automatically satisfied.

\subsection{Euclidean Jordan algebras}
Let $\VV$ be a finite dimensional real inner product space. If $\VV$ is equipped with a Jordan product $\circ : \VV \times \VV \to \VV$ such that it is (i) commutative, (ii) satisfies the Jordan identity,
\[ x^2 \circ (x \circ y) = x \circ (x^2 \circ y), \]
where $x^2 = x \circ x$, and is (iii) comparable with the underlying inner product, i.e.,
\[ \ip{x \circ y}{z} = \ip{y}{x \circ z}, \]
for $x, y, z \in \VV$, then the triple $(\VV, \circ, \ip{\cdot}{\cdot})$, or simply $\VV$, is called a \emph{Euclidean Jordan algebra}. An element $e \in \VV$ is a unit if $x \circ e = x$ for all $x \in \VV$. Throughout the paper, we assume the existence of the unit element. It is well known \cite[Corollary IV.1.5, Theorem V.3.7]{FK} that any Euclidean Jordan algebra can be decomposed into a direct product of simple Euclidean Jordan algebras and there are only five simple ones up to isomorphism. The space of $n \times n$ Hermitian matrices together with the Jordan product and inner product defined respectively by
\[ X \circ Y = \frac{XY + YX}{2}, \quad \ip{X}{Y} = \operatorname{tr}(XY)\;\mbox{(trace inner product),} \]
is a primary example of a simple Euclidean Jordan algebra.

The spectral decomposition theorem \cite[Theorem III.1.2]{FK} asserts that there exists a positive integer $n$, which we will call the rank of $\VV$, such that any element $x \in \VV$ can be written as
\[ x = \lambda_1(x) e_1 + \lambda_2(x) e_2 + \cdots + \lambda_n(x) e_n, \]
where $\lambda_1(x) \geq \lambda_2(x) \geq \cdots \geq \lambda_n(x)$ are real numbers called the eigenvalues of $x$, and $\{e_1, e_2, \ldots, e_n\}$ is a Jordan frame. We define the \emph{eigenvalue map} $\lambda : \VV \to \Rn$ as follows:
\[ \lambda(x) = \big( \lambda_1(x), \lambda_2(x), \ldots, \lambda_n(x) \big) \in \Rn. \]
Thus, $\lambda(x)$ is the vector consisting of the eigenvalues of $x$ written in nonincreasing order. It is known \cite{Baes} that $\lambda : \VV \to \Rn$ is (Lipschitz) continuous. 

For $a \in \VV$, define a linear map $L_a : \VV \to \VV$ by $L_a x = a \circ x$ for all $x \in \VV$, which we call the \emph{Lyapunov map}. Then, an easy calculation verifies that
\begin{equation} \label{Lyapunov}
    \ip{a}{(L_uL_v-L_vL_u)b} = \ip{u}{(L_aL_b-L_bL_a)v},
\end{equation}
for all $a, b, u, v \in \VV$. We say that elements $a, b \in \VV$ \emph{operator commute} provided the corresponding Lyapunov maps commute, which means that $L_a(L_b x) = L_b(L_a x)$ for all $x \in \VV$. In the case of Hermitian matrices, the operator commutativity is equivalent to the usual (matrix) commutativity.

It is known \cite{FK} that $a, b \in \VV$ operator commute if and only if both $a$ and $b$ can be simultaneously decomposed by the same Jordan frame. That is, there exists a Jordan frame $\{e_1, e_2, \ldots, e_n\}$ and real numbers $\alpha_i, \beta_i \in \mathbb{R}$ for $i = 1, 2, \ldots, n$ such that
\[ a = \alpha_1 e_1 + \alpha_2 e_2 + \cdots + \alpha_n e_n, \quad b = \beta_1 e_1 + \beta_2 e_2 + \cdots + \beta_n e_n, \]
and $\alpha$ and $\beta$ are some rearrangement of $\lambda(a)$ and $\lambda(b)$, respectively. We say that $a, b \in \VV$ \emph{strongly operator commute} if they operator commute with the additional condition that $\alpha_i = \lambda_i(a)$ and $\beta_i = \lambda_i(b)$ for each $i = 1, 2, \ldots, n$. Then $a$ and $b$ strongly operator commute if and only if $\ip{a}{b} = \ip{\lambda(a)}{\lambda(b)}$, which is equivalent to $\lambda(a+b)=\lambda(a)+\lambda(b)$ (cf. \cite[Proposition 2.6]{Gowda2019}).

A linear map $X : \VV \to \VV$ is an \emph{(algebra) automorphism} if $X$ is invertible and
\[ X(a \circ b) = Xa \circ Xb, \]
for all $a, b \in \VV$. The set of all automorphisms of $\VV$ is denoted by $\AutV$. As an example, an algebra automorphism of the space of Hermitian matrices is either of the form $X \mapsto U^{\ast}XU$ or $X \mapsto U^{\ast} \overline{X}U$, where $\overline{X}$ is a complex conjugate of $X$ and $U$ is a unitary matrix (cf. \cite[Theorem\,10]{Or24}).

A linear map $D : \VV \to \VV$ is a \emph{derivation} if $D$ satisfies the Leibniz's rule,
\[ D(a \circ b) = Da \circ b + a \circ Db, \]
for all $a, b, \in \VV$. The set of all derivations of $\VV$ is denoted by $\DerV$.

In the next proposition, we present an alternative characterization of the operator commutativity in terms of derivations of $\VV$.
\begin{proposition}\label{comder}
    Let $\VV$ be a Euclidean Jordan algebra. For $a, b \in \VV$, the following statements are equivalent:
    \begin{itemize}
      \item[$(a)$] $a$ and $b$ operator commute.
      \item[$(b)$] $\ip{a}{Db} = 0$ for all $D \in \DerV$.
      \item[$(c)$] $a\otimes b\in{\rm Der}(\VV)^\perp$.
    \end{itemize}
\end{proposition}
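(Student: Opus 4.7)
The plan is to prove the chain (a)$\Leftrightarrow$(b)$\Leftrightarrow$(c) using the adjoint relation (\ref{adjoint}), the identity (\ref{Lyapunov}), and a short Jordan frame computation; no deep structure theory of derivations is required.

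The equivalence (b)$\Leftrightarrow$(c) is essentially a restatement. Applying (\ref{adjoint}) to the linear map $D \in \LV$ with argument $b$ tested against $a$ yields $\ip{Db}{a} = \ip{D}{a \otimes b}$. Hence (b), namely $\ip{a}{Db}=0$ for every $D \in \DerV$, is precisely the Frobenius-orthogonality of $a \otimes b$ to the subspace $\DerV$ of $\LV$.

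For (b)$\Rightarrow$(a), I would take $u, v \in \VV$ arbitrary and invoke the classical fact (a direct consequence of the Jordan identity) that $L_uL_v - L_vL_u$ is a derivation of $\VV$. Plugging this $D$ into (b) and using identity (\ref{Lyapunov}) gives $0 = \ip{a}{(L_uL_v-L_vL_u)b} = \ip{u}{(L_aL_b-L_bL_a)v}$ for all $u, v \in \VV$. The bilinearity of $\ip{\cdot}{\cdot}$ then forces $L_aL_b = L_bL_a$, which is the operator commutativity of $a$ and $b$.

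The main step is (a)$\Rightarrow$(b). Operator commutativity provides a Jordan frame $\{e_1, \dots, e_n\}$ and scalars $\alpha_i, \beta_i$ with $a = \sum_i \alpha_i e_i$ and $b = \sum_i \beta_i e_i$, so $\ip{a}{Db} = \sum_{i,j} \alpha_i \beta_j \ip{e_i}{De_j}$, and it suffices to show $\ip{e_i}{De_j} = 0$ for all $D \in \DerV$ and all $i, j$. For $i=j$, applying the Leibniz rule to $e_i = e_i \circ e_i$ gives $De_i = 2\, e_i \circ De_i$; pairing with $e_i$ and using the self-adjointness of $L_{e_i}$ together with $e_i \circ e_i = e_i$ yields $\ip{e_i}{De_i} = 2\ip{e_i}{De_i}$, hence $0$. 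For $i \neq j$, applying Leibniz to $e_i \circ e_j = 0$ gives $De_i \circ e_j + e_i \circ De_j = 0$; pairing with $e_i$, using the self-adjointness of $L_{e_i}, L_{e_j}$, the orthogonality $e_i \circ e_j = 0$, and the idempotency $e_i \circ e_i = e_i$, yields $\ip{e_i}{De_j} = 0$. The only nontrivial imported ingredient is that $[L_u, L_v]$ is a derivation; everything else is bookkeeping, and I expect this to be the main (but mild) obstacle — making sure the frame computation is done cleanly on the diagonal and off-diagonal terms.
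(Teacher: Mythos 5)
Your proposal is correct, and the interesting divergence is in the implication (a)$\Rightarrow$(b). The paper proves it by invoking the structure theorem of Koecher (\cite[Chapter IV, Theorem\,8]{Koecher}) that every derivation of a semisimple Jordan algebra is \emph{inner}, i.e.\ a finite sum of commutators $L_{u_i}L_{v_i}-L_{v_i}L_{u_i}$, and then applies identity \eqref{Lyapunov} term by term. You instead bypass this entirely: you reduce $\ip{a}{Db}$ to $\sum_{i,j}\alpha_i\beta_j\ip{e_i}{De_j}$ via the common Jordan frame, and show $\ip{e_i}{De_j}=0$ directly from the Leibniz rule applied to $e_i\circ e_i=e_i$ and $e_i\circ e_j=0$, together with self-adjointness of $L_{e_i}$. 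I checked both frame computations and they are correct. Your argument is more elementary, needing no structure theory of derivations beyond the definition, whereas the paper's argument is shorter on the page but hides the weight in Koecher's theorem (which is also what supplies the converse, so the paper's route makes (a)$\Rightarrow$(b) and (b)$\Rightarrow$(a) visibly dual). Your handling of (b)$\Rightarrow$(a) and (b)$\Leftrightarrow$(c) matches the paper's: use that $[L_u,L_v]\in\DerV$ plus \eqref{Lyapunov}, and the adjoint relation \eqref{adjoint}, respectively.
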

%

\begin{proof}
Let us prove $(a)\Leftrightarrow(b)$.
    Suppose that $a$ and $b$ operator commute so that $L_aL_b = L_bL_a$. Let $D \in \DerV$. Since every Euclidean Jordan algebra is semisimple, from \cite{Jacobson1949} (cf. \cite[Chapter IV, Theorem\,8]{Koecher}), we know that
    \[ D = \sum_{i=1}^{k} (L_{u_i}L_{v_i} - L_{v_i}L_{u_i}), \] 
    for some positive integer $k$ and some elements $u_i, v_i \in \VV$, $i = 1, 2, \ldots, k$. Hence, thanks to \eqref{Lyapunov}, we deduce that
    \[ \ip{a}{Db} = \sum_{i=1}^k \ip{a}{(L_{u_i}L_{v_i} - L_{v_i}L_{u_i})b}
        = \sum_{i=1}^k \ip{u_i}{(L_aL_b - L_bL_a)v_i} = 0. \]
    Hence, $(b)$ is satisfied. The converse of the statement is proved in \cite{GJ2017} and is given here for the sake of completeness. Assume that $(b)$ is true. Then, since $D: = L_uL_v - L_uL_v$ for any $u, v \in \VV$ is a derivation, we see that from $(b)$ $\ip{u}{(L_aL_b - L_bL_a)v} = 0$. As $u, v \in \VV$ are arbitrary, it follows that $L_aL_b - L_bL_a = 0$. Thus, $a$ and $b$ operator commute. The equivalence between $(b)$ and $(c)$ is a consequence of the equality $\ip{a}{Db} = \ip{a \otimes b}{D}$ which is explained in \eqref{adjoint}.
\end{proof}

Recall that a set $Q \in \Rn$ is said to be \emph{symmetric} if the implication $u \in Q \Rightarrow Pu \in Q$ holds for all $n \times n$ permutation matrices $P$. Also, a function $f : \Rn \to \mathbb{R}$ is said to be \emph{symmetric} if $f(Pu) = f(u)$ for all $n \times n$ permutation matrices $P$. Extending these to Euclidean Jordan algebras, a set $E \subset \VV$ is called a \emph{spectral set} if there exists a symmetric set $Q \subset \Rn$ such that $E = \lambda^{-1}(Q)$, and $E$ is called a \emph{weakly spectral set} if $E$ is invariant under automorphisms, that is, the implication $x \in E \Rightarrow X a \in E$ holds for all $X \in \AutV$. Likewise, a function $F : \VV \to \mathbb{R}$ is said to be a \emph{spectral function} if there exists a symmetric function $f : \Rn \to \mathbb{R}$ such that $F(x) = f(\lambda(x))$ for all $x \in \VV$, and $F$ is a \emph{weakly spectral function} if $F(Xa) = F(a)$ holds for all $a \in \VV$ and $X \in \AutV$. It has been shown \cite{JG2017} that all spectral sets/functions are weakly spectral, but the converse holds only when $\VV$ is simple or $\Rn$. In particular, in the space of Hermitian matrices (which is simple), spectrality and weak spectrality are equivalent.

\subsection{The limiting normal cone of a smooth manifold}
As we see in Theorem\,\ref{th:opt}, to extract information from the optimality condition \eqref{fermat}, we are required to describe the limiting normal cone of the feasible set of the optimization problem of our interest. We will see in the next section that the study of our optimization problems reduces to considering some optimization problems formulated over the feasible set $\AutV$. This set has the remarkable property of being a smooth manifold, which can be explained by the fact that $\AutV$ has a Lie group structure. Recall that a \emph{Lie group} in a finite dimensional real inner product space $\VV$ is a nonempty subset $\GG$ of $\VV$ such that it has a group structure, it is a smooth manifold in $\VV$, and the group operation $(g_1, g_2) \in \GG \times \GG \mapsto g_1 \cdot g_2$ and the inverse map $g \in \GG \mapsto g^{-1}$ are both smooth. The Cartan's closed subgroup theorem (cf. \cite[Theorem\,20.12]{Lee}) asserts that any closed subgroup of a Lie group $\GG$ is a Lie subgroup (and thus a smooth submanifold) of $\GG$. Note that the space $\GL(\VV)$ of invertible linear transformations from $\VV$ to itself is a Lie group with the composition as a group operation. This Lie group is known as the \emph{general linear group} of $\VV$. Since $\AutV$ is a closed subgroup of $\GL(\VV)$ (cf. \cite[Section\,II.4]{FK}), Cartan's closed-subgroup theorem deduces that $\AutV$ is a Lie (sub)group of $\GL(\VV)$. Therefore, it is a smooth manifold in $\LV$.

Let $\MM$ be a smooth manifold embedded in a finite dimensional real inner product space $\VV$. For every point $p\in\MM$, the \emph{tangent space} $T(p;\MM)$ of $\MM$ at $p$ is a linear subspace of $\VV$ consisting of vectors $h\in\VV$ for which there exists a smooth curve $\gamma : (-\epsilon, \epsilon) \to \MM$ with $\epsilon>0$, such that
\[ \gamma(0) = p \quad \text{and} \quad \gamma^\prime(0) = h, \]
(cf. \cite[Definition\ 4.4]{Gallier}). The \emph{normal space} of $\MM$ at $p$ is $T(p;\MM)^\perp$; that is, the orthogonal complement to the tangent space $T(p;\MM)$.

It is mentioned in the literature (e.g., \cite[Example\,6.8]{Rock-Wets}), in the case of a smooth manifold of a finite dimensional real inner product space $\VV$, the Bouligand tangent cone and the limiting normal cone reduce to the tangent and normal spaces, respectively, associated with such a manifold in differential geometry. However, this affirmation is left as an exercise in the book of Rockafellar and Wets \cite{Rock-Wets} and it seems not easy to find a suitable reference where the proof is given. For this reason, we state the following proposition with our terminology and provide a detailed proof in the appendix.

\begin{proposition}\label{normmanifold}
    Let $\MM$ be a smooth manifold in a finite dimensional real inner product space $\VV$ and let $p\in\MM$. Then,
    \begin{itemize}
        \item[$(a)$] $T(p;\MM)=\widehat{T}(p;\MM),$\label{one}
        \item[$(b)$] $\widehat{N}(p;\MM)=T(p;\MM)^\perp,$\label{two}
        \item[$(c)$] $N(p;\MM)=\widehat{N}(p;\MM)$.\label{three}
    \end{itemize}
\end{proposition}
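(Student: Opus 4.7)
The plan is to establish the three assertions in order, using a local parameterization of $\MM$ around $p$. By the definition of a smooth submanifold, there exist an open neighborhood $U \subseteq \mathbb{R}^d$ of the origin (with $d = \dim \MM$) and a smooth embedding $\psi : U \to \VV$ with $\psi(0) = p$ such that $\psi(U)$ is an open neighborhood of $p$ in $\MM$. The differential $d\psi_0 : \mathbb{R}^d \to \VV$ is an injective linear map whose image is precisely $T(p;\MM)$.

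For \ref{one}, the inclusion $T(p;\MM) \subseteq \widehat{T}(p;\MM)$ is immediate: given $h = d\psi_0(w)$, the curve $\gamma(t) := \psi(tw)$ lies in $\MM$ for $t$ small with $\gamma(0)=p$ and $\gamma'(0)=h$, so $x^k := \gamma(t_k)$ for any $t_k \downarrow 0$ witnesses $h \in \widehat{T}(p;\MM)$. The reverse inclusion is the main technical step. Given $x^k \in \MM$ with $x^k \to p$, $t_k \downarrow 0$, and $(x^k-p)/t_k \to h$, for large $k$ we may write $x^k = \psi(u^k)$ with $u^k \to 0$ (using that $\psi$ is a homeomorphism onto $\psi(U)$). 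A first-order Taylor expansion gives
\[ \frac{x^k - p}{t_k} = d\psi_0\!\left(\frac{u^k}{t_k}\right) + \frac{r(u^k)}{t_k}, \qquad \text{where } \frac{\|r(u^k)\|}{\|u^k\|} \to 0. \]
Since $d\psi_0$ is injective on a finite-dimensional space, there exists $c>0$ with $\|d\psi_0(u)\| \geq c\|u\|$. Combined with the boundedness of $\{(x^k-p)/t_k\}$, this forces $\{u^k/t_k\}$ to be bounded; passing to a subsequence $u^k/t_k \to w$, the remainder term vanishes and we conclude $h = d\psi_0(w) \in T(p;\MM)$.

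Part \ref{two} is then immediate from \ref{one} together with the polarity relation \eqref{tanchar}: $\widehat{N}(p;\MM) = \widehat{T}(p;\MM)^\circ = T(p;\MM)^\circ$, and since $T(p;\MM)$ is a linear subspace, its polar cone coincides with its orthogonal complement $T(p;\MM)^\perp$.

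For \ref{three}, the inclusion $\widehat{N}(p;\MM) \subseteq N(p;\MM)$ is already noted in the excerpt. For the reverse, let $v \in N(p;\MM)$ and pick sequences $x^k \to p$ in $\MM$ and $v^k \to v$ with $v^k \in \widehat{N}(x^k;\MM)=T(x^k;\MM)^\perp$ (applying \ref{two} at each $x^k$). For an arbitrary $h = d\psi_0(w) \in T(p;\MM)$, writing $x^k = \psi(u^k)$ with $u^k\to 0$, the vector $h^k := d\psi_{u^k}(w)$ belongs to $T(x^k;\MM)$ and converges to $h$ by smoothness of $\psi$. Then $\ip{v^k}{h^k}=0$ passes to the limit to give $\ip{v}{h} = 0$, so $v \in T(p;\MM)^\perp = \widehat{N}(p;\MM)$. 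The main obstacle is the reverse inclusion in \ref{one}, where one must upgrade a convergent secant direction into an actual tangent vector; the remaining parts reduce to polarity and a continuity-of-tangent-spaces argument.
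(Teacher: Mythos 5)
Your proof is correct but takes a genuinely different route: you work with a local \emph{parameterization} of $\MM$ (a smooth embedding $\psi : U \subseteq \mathbb{R}^d \to \VV$ with $\psi(0)=p$ and $T(p;\MM) = {\rm Im}(d\psi_0)$), whereas the paper works with a local \emph{defining function} (a smooth submersion $f$ with $\MM \cap U = f^{-1}(0)$ and $T(p;\MM) = \operatorname{Ker} Df(p)$). The two characterizations are equivalent, but they trade difficulty between the parts. In part \ref{one}, the paper's reverse inclusion is shorter: since $f(x^k)=f(p)=0$, differentiability of $f$ at $p$ immediately gives $Df(p)h=0$ with no auxiliary estimate, whereas your version needs the lower bound $\Vert d\psi_0(u)\Vert \geq c\Vert u\Vert$ to control $u^k/t_k$ before extracting a convergent subsequence. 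In part \ref{three}, the situation reverses: the paper writes $\widehat N(x^k;\MM) = {\rm Im}(Df(x^k)^\ast)$, expands $v^k$ in the moving basis $\left\{Df(x^k)^\ast c_i\right\}$, proves boundedness of the coefficients, and passes to a subsequence; you sidestep this by approximating a fixed tangent vector $h = d\psi_0(w)$ at $p$ with the nearby tangent vectors $h^k = d\psi_{u^k}(w)$ at $x^k$ and letting $\ip{v^k}{h^k}=0$ pass to the limit, which is arguably cleaner. Part \ref{two} is identical in both. One detail worth spelling out in your part \ref{one}: the reverse triangle inequality gives $\Vert (x^k-p)/t_k\Vert \geq (c-\epsilon_k)\Vert u^k\Vert /t_k$ with $\epsilon_k := \Vert r(u^k)\Vert/\Vert u^k\Vert \to 0$, which is what actually yields boundedness of $\Vert u^k\Vert/t_k$, and then $r(u^k)/t_k = (r(u^k)/\Vert u^k\Vert)\cdot(\Vert u^k\Vert/t_k) \to 0$ uses that same boundedness, so the order of deductions matters and is slightly compressed in your write-up.
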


Since $\DerV$ is the Lie algebra of a Lie group $\AutV$ (cf. \cite[Section\,II.4]{FK}), we have
\[ T(I; \AutV) = \DerV, \]
where $I$ is the identity map on $\VV$. Hence, thanks to Proposition\,\ref{normmanifold}, we have the following.

\begin{proposition}\label{tangentnormal}
    Let $\VV$ be a Euclidean Jordan algebra and let $I$ be the identity map on $\VV$. Then,
    \[ N(I;\AutV) = \DerV^\perp. \]
\end{proposition}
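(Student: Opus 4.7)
The plan is to obtain this proposition as a direct consequence of Proposition \ref{normmanifold} applied to the smooth manifold $\AutV \subseteq \LV$ at the distinguished point $I$. All the heavy lifting has already been done: the text has established (via Cartan's closed-subgroup theorem) that $\AutV$ is a Lie subgroup of $\GL(\VV)$ and hence a smooth submanifold of $\LV$, and the identification $T(I; \AutV) = \DerV$ is quoted from \cite{FK} as the fact that $\DerV$ is the Lie algebra of $\AutV$.

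Concretely, I would chain together the following three equalities in sequence. First, apply Proposition \ref{normmanifold}\ref{three} at $p = I$ to the manifold $\MM = \AutV$, giving
\[
N(I; \AutV) = \widehat{N}(I; \AutV).
\]
Second, apply Proposition \ref{normmanifold}\ref{two} to rewrite the Fr\'echet normal cone as the orthogonal complement of the tangent space,
\[
\widehat{N}(I; \AutV) = T(I; \AutV)^\perp.
\]
Third, substitute the Lie-theoretic identification $T(I; \AutV) = \DerV$ to conclude $N(I; \AutV) = \DerV^\perp$.

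I do not foresee any genuine obstacle here, since every ingredient is either proved in Proposition \ref{normmanifold} or cited as a standard fact. The only judgement call is whether to spend a line explaining why the tangent space to a Lie group at the identity agrees with its Lie algebra; since the paper has already pointed the reader to \cite[Section II.4]{FK} for this identification and to \cite{Lee} for the general Lie group background, a brief reference should suffice rather than a self-contained derivation.
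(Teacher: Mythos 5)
Your proof is correct and is exactly the argument the paper has in mind: the paper derives the proposition in the sentence immediately preceding it by combining Proposition \ref{normmanifold} with the standard Lie-theoretic fact $T(I;\AutV)=\DerV$ cited from \cite[Section II.4]{FK}. The only difference is that you spell out the chain $N(I;\AutV)=\widehat{N}(I;\AutV)=T(I;\AutV)^\perp=\DerV^\perp$ explicitly, which the paper leaves implicit.
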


\section{Nonsmooth commutation principles}\label{sec:main}

We present our nonsmooth commutation principle for the maximization case. Recall that $\partial_\EE\Theta(a)$ denotes the subdifferential of $\Theta$ at $a$ relative to $\EE$, defined as in \eqref{Ssubdif}. 

\begin{theorem} \label{th:mainmax}
    Let $\mathcal V$ be a Euclidean Jordan algebra and let $\Theta : \VV \to \bR$ be a function. Let $F : \VV \to \mathbb{R}$ be a weakly spectral function and let $\mathcal E\subseteq \VV$ be a weakly spectral set. Let $a \in \VV$ be such that $\Theta(a)$ is finite and $\partial_\EE\Theta(a) \neq \emptyset$. Suppose that $a$ is a local maximizer of the map
    \begin{equation} \label{mapmax}
        x\in \mathcal E \mapsto \Theta(x)+F(x).
    \end{equation}
    Then, $a$ operator commutes with each element of $\partial_\EE\Theta(a)$.
\end{theorem}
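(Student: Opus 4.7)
The plan is to reduce the problem to a smooth optimization problem over the Lie group $\AutV$ by exploiting the weak spectrality assumptions, and then use the tangent space characterization $T(I;\AutV)=\DerV$ together with Proposition \ref{comder}.

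Fix $v\in\partial_\EE\Theta(a)$. Because $\EE$ is weakly spectral, $Xa\in\EE$ for every $X\in\AutV$; since $X\mapsto Xa$ is continuous and sends $I$ to $a$, the element $Xa$ lies in any prescribed neighborhood of $a$ whenever $X$ is close enough to $I$ in $\AutV$. The local maximality of $a$ then gives
\[ \Theta(Xa)+F(Xa)\leq \Theta(a)+F(a) \]
for all such $X$. Since $F$ is weakly spectral, $F(Xa)=F(a)$, so this reduces to $\Theta(Xa)\leq\Theta(a)$. Combining this with the subdifferential inequality $\Theta(Xa)-\Theta(a)\geq\ip{v}{Xa-a}$ (valid because $Xa\in\EE$) yields
\[ \ip{v}{Xa-a}\leq 0 \qquad \text{for all $X\in\AutV$ close enough to $I$.} \]

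Next I pass to the infinitesimal level. Fix an arbitrary $D\in\DerV=T(I;\AutV)$ and choose, by the definition of the tangent space, a smooth curve $\gamma:(-\epsilon,\epsilon)\to\AutV$ with $\gamma(0)=I$ and $\gamma'(0)=D$. For $|t|$ small, the point $\gamma(t)$ lies in the neighborhood of $I$ where the previous inequality holds, so the scalar function $\varphi(t):=\ip{v}{\gamma(t)a-a}$ satisfies $\varphi(t)\leq 0=\varphi(0)$ on both sides of $0$. Hence $t=0$ is a local maximizer of the smooth function $\varphi$, so $\varphi'(0)=0$. A direct differentiation gives $\varphi'(0)=\ip{v}{\gamma'(0)a}=\ip{v}{Da}$, hence $\ip{v}{Da}=0$.

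Since $D\in\DerV$ was arbitrary, $\ip{v}{Da}=0$ for every $D\in\DerV$. Now I apply Proposition \ref{comder}: swapping the roles of $a$ and $v$ in condition (b) there, this is exactly the condition saying that $v$ and $a$ operator commute, which is symmetric in the two arguments. Therefore $a$ operator commutes with $v$, completing the proof.

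The only mildly delicate point I expect is the passage from the neighborhood-of-$I$ inequality to a statement on the tangent space $\DerV$, which is why I route the argument through a smooth curve in $\AutV$ rather than through a chart or through Proposition \ref{normmanifold}; the latter would be available but is overkill here since I only need the first-order necessary condition for a smooth function on a manifold, not the full normal-cone identification. Everything else is a straightforward combination of the definitions of weakly spectral sets/functions, the subdifferential relative to $\EE$, and the characterization of operator commutativity in Proposition \ref{comder}.
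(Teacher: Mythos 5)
Your proof is correct. The reduction to the group $\AutV$ is identical to the paper's: use weak spectrality of $\EE$ to see that $Xa\in\EE$ for $X\in\AutV$, weak spectrality of $F$ to cancel $F(Xa)=F(a)$, and the $\EE$-subdifferential inequality to arrive at $\ip{v}{Xa-a}\leq 0$ for all $X\in\AutV$ near $I$. Where you diverge is in extracting the first-order information. The paper observes that $I$ is a local maximizer of the \emph{smooth} function $X\mapsto\ip{v\otimes a}{X}$ on $\AutV$, then applies the stationarity condition $-v\otimes a\in N(I;\AutV)$ from variational analysis (Theorem 6.12 of Rockafellar--Wets) and invokes Proposition \ref{tangentnormal} to identify $N(I;\AutV)=\DerV^{\perp}$, which requires all of Proposition \ref{normmanifold}. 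You instead take the classical differential-geometric route: for each $D\in\DerV=T(I;\AutV)$ pick a smooth curve $\gamma$ in $\AutV$ with $\gamma(0)=I$, $\gamma'(0)=D$, observe that $t=0$ is a local maximizer of the smooth scalar function $\varphi(t)=\ip{v}{\gamma(t)a-a}$, and read off $\varphi'(0)=\ip{v}{Da}=0$. Both methods land on exactly the same identity $\ip{v}{Da}=0$ for all $D\in\DerV$, and then Proposition \ref{comder}(b) (with the roles of the two arguments swapped, as you note — harmless by symmetry of operator commutativity) finishes the job. Your variant is more elementary and entirely self-contained: it never needs the variational-analytic normal cone of $\AutV$, only the definition of the tangent space via curves and the fact that $T(I;\AutV)=\DerV$. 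The paper's heavier machinery is not wasted, though — Proposition \ref{normmanifold} and the normal-cone identification are genuinely required in the minimization case (Theorem \ref{th:main}), where the objective on $\AutV$ is nonsmooth and one must use the Fermat-type rule \eqref{fermat} involving $N(I;\AutV)$; for that argument a smooth-curve shortcut is unavailable.
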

\begin{proof}
    Let $a$ be a local maximizer of \eqref{mapmax}. Then,
    \begin{equation}\label{ineqmax1}
    \Theta(a)+F(a)\;\geq\; \Theta(x)+F(x),\;\text{for all }x\in\mathcal E\cap  N_a,
    \end{equation}
    for some $ N_a$ neighborhood of $a$. Since $\Vert Xa-a\Vert\leq \Vert X-I\Vert \Vert a\Vert$ for all $X \in \LV$, there exists a neighborhood $\mathcal N_I$ of $I$ in $\LV$ such that $Xa\in N_a$ for all $X\in\mathcal N_I$. Furthermore, $Xa\in\mathcal E$ for all $X\in\AutV$ because $\mathcal E$ is weakly spectral. Thus, $Xa\in\mathcal E\cap  N_a$ for all $X\in \AutV\cap \mathcal N_I$. Then, from \eqref{ineqmax1} we deduce that
    \begin{equation}\label{ineq2a}
    \Theta(a)+F(a)\;\geq\; \Theta(Xa)+F(Xa),\;\text{for all }X\in\AutV\cap \mathcal N_I.
    \end{equation}
    Now, $F(a)=F(Xa)$ for all $X\in\AutV$ because $F$ is weakly spectral. Hence, \eqref{ineq2a} becomes
    \begin{equation}\label{ineqmax2}
    \Theta(a)\;\geq\; \Theta(Xa),\;\text{for all }X\in\AutV\cap \mathcal N_I.
    \end{equation}
    Let $v\in \partial_\EE\Theta(a)$. From \eqref{Ssubdif} we have that
    \begin{equation}\label{ineqba}
    \Theta(x) - \Theta(a) - \ip{v}{x-a} \geq 0,
    \end{equation}
    for all $x\in\EE$. By combining \eqref{ineqmax2} and \eqref{ineqba} we deduce that
    \[ \ip{v}{a} \geq \ip{v}{Xa}, \; \text{for all} \; X \in \AutV \cap \mathcal N_I. \]
    It means that $I$ is a local maximizer of the map $X \in \AutV \mapsto \ip{v}{Xa}$. From \eqref{adjoint} we have that $\ip{v}{Xa} = \ip{v \otimes a}{X}$. Thus, $I$ is a local solution of
    \begin{equation}\label{maxdif}
        \operatorname*{maximize}\limits_{X \in \AutV} \;\; \ip{X}{v \otimes a}.
    \end{equation}
    Since the objective function of \eqref{maxdif} is differentiable, the first-order condition for optimality says  $-v\otimes a\in N(I,\AutV)$ (cf. \cite[Theorem\,6.12]{Rock-Wets}). From Proposition\,\ref{tangentnormal} we deduce that $v\otimes a\in\DerV^\perp.$ Thus, from Proposition\,\ref{comder} we conclude that $a$ and $v$ operator commute.
\end{proof}

For the minimization case, we will use the necessary optimality conditions provided in Theorem\,\ref{th:opt}. We will see that the qualification condition \eqref{qconstraint} reduces to the fact that among all the elements in $\partial^\infty\Theta(a)$, the zero vector is the only one that operator commutes with $a$.
\begin{theorem}\label{th:main}
    Let $\mathcal V$ be a Euclidean Jordan algebra, and let $\Theta:\VV\to\bR$ be a function. Let $F:\VV\to\mathbb{R}$ be a weakly spectral function, and let $\mathcal E\subseteq \VV$ be a weakly spectral set. Let $a\in\VV$ be such that $\Theta$ is regular at $a$. Suppose that $a$ is a local minimizer of the map
    \begin{equation}\label{mapT}
     x\in \mathcal E \mapsto \Theta(x)+F(x),
     \end{equation}
    and suppose that the following condition is satisfied:
    \begin{equation}\label{quali}
    \text{If $a$ operator commutes with $v\in\partial^\infty\Theta(a)$, then $v=0$.}
    \end{equation}
    Then, $a$ operator commutes with some element in $\partial\Theta(a)$.
\end{theorem}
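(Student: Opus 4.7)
The plan is to mirror the proof of the maximization case (Theorem \ref{th:mainmax}) but to replace the smooth first-order condition by the nonsmooth necessary optimality conditions on the smooth manifold $\AutV$ (Theorem \ref{th:opt}). This forces us to invoke both a chain rule and a qualification condition, and to identify the limiting normal cone of $\AutV$ at $I$. As in the proof of Theorem \ref{th:mainmax}, I first exploit the weak spectrality of $\EE$ and $F$: since $\Vert Xa - a \Vert \leq \Vert X - I \Vert \, \Vert a \Vert$, a neighbourhood $\mathcal N_I$ of $I$ in $\LV$ is mapped by $X \mapsto Xa$ into any prescribed neighbourhood of $a$, and combined with $Xa \in \EE$ and $F(Xa) = F(a)$ for $X \in \AutV$, this shows that $I$ is a local minimizer over $\AutV$ of the composite map $h : \LV \to \bR$ defined by $h(X) := \Theta(Xa)$. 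The degenerate case $a = 0$ is trivial since $0$ operator commutes with every vector while regularity forces $\partial \Theta(a) \neq \emptyset$; so assume henceforth $a \neq 0$.

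Next, I apply the chain rule (Proposition \ref{pr:chain}) to $h = \Theta \circ A$, where $A : \LV \to \VV$ is the linear map $A(X) = Xa$ (with $b = 0$). Its adjoint is $A^\ast(v) = v \otimes a$ by \eqref{adjoint}, and since $(v \otimes a)(a) = \Vert a \Vert^2 v$, the kernel of $A^\ast$ is trivial, so the qualification \eqref{cond:chain} is automatic. Proposition \ref{pr:chain} then delivers
\[ \partial h(I) = \{\, v \otimes a : v \in \partial \Theta(a) \,\} \quad \text{and} \quad \partial^\infty h(I) = \{\, v \otimes a : v \in \partial^\infty \Theta(a) \,\}, \]
together with regularity of $h$ at $I$. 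Lower semicontinuity of $h$ around $I$ follows from continuity of $X \mapsto Xa$ and from $\Theta$ being lower semicontinuous around $a$.

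With these ingredients in hand I apply Theorem \ref{th:opt} with $C = \AutV$ and $f = h$. By Proposition \ref{tangentnormal}, $N(I; \AutV) = \DerV^\perp$ is a linear subspace, so $-N(I;\AutV) = \DerV^\perp$. By Proposition \ref{comder}, an element $v \otimes a \in \partial^\infty h(I)$ lies in $\DerV^\perp$ precisely when $v \in \partial^\infty \Theta(a)$ operator commutes with $a$; hypothesis \eqref{quali} then forces $v = 0$, hence $v \otimes a = 0$. Thus the qualification \eqref{qconstraint} is satisfied, and the conclusion \eqref{fermat} yields $v \in \partial \Theta(a)$ with $v \otimes a \in \DerV^\perp$. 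Applying Proposition \ref{comder} one last time concludes that $a$ and $v$ operator commute.

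The main obstacle to anticipate is the translation between the abstract qualification \eqref{qconstraint} on $N(I;\AutV)$ and the concrete hypothesis \eqref{quali} about operator commutativity with elements of $\partial^\infty \Theta(a)$. The bridge is supplied by Proposition \ref{tangentnormal} together with the derivation-perpendicularity characterization of operator commutativity in Proposition \ref{comder}, which is precisely why those two auxiliary results were established beforehand; once they are in place, the argument follows the same pattern as the maximization proof, with the chain rule playing the role previously occupied by the Fréchet derivative of a smooth objective.
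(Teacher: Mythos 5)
Your proof is correct and follows essentially the same route as the paper's: reduce to a local minimization of $\Psi(X)=\Theta(Xa)$ over $\AutV$, verify the chain-rule hypotheses (noting $\mathrm{Ker}(\mathcal A^\ast)=\{0\}$ when $a\neq 0$), and then translate the qualification condition of Theorem~\ref{th:opt} via Propositions~\ref{tangentnormal} and~\ref{comder} into hypothesis~\eqref{quali}. The only inessential difference is your explicit remark on lower semicontinuity of $h$, which the paper leaves implicit since regularity of $\Psi$ at $I$ (delivered by Proposition~\ref{pr:chain}) already entails it.
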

\begin{proof}
    Observe that $\partial\Theta(a)\neq\emptyset$ because $\Theta$ is regular at $a$. If $a=0$, then the result follows due to the fact that the zero element operator commutes with every element in $\VV$. Now, assume that $a\neq 0$ and that $a$ is a local minimizer of the map \eqref{mapT}. This means that
    \begin{equation*}
    \Theta(a)+F(a) \leq \Theta(x)+F(x),\;\text{for all }x\in\mathcal E\cap  N_a,
    \end{equation*}
    where $ N_a$ is some neighborhood of $a$. Analogous to the proof of Theorem\,\ref{th:mainmax}, we deduce that 
   \begin{equation*}
    \Theta(a) \leq \Theta(Xa),\;\text{for all }X\in\AutV\cap \mathcal N_I,
    \end{equation*}
    for some $\mathcal N_I$, a neighborhood of $I$ in $\LV$. It means that $I$ is a local solution of
    \begin{equation}\label{opt}
        \operatorname*{minimize}_{X \in \AutV} \;\; \Psi(X):=\Theta(Xa).
    \end{equation}
   To obtain our desired result we will look at the optimality conditions for $I$ to be a local solution of \eqref{opt} provided by Theorem\,\ref{th:opt}. First, observe that $\Psi$ is the composition of $\Theta$ with the linear map $\mathcal{A} : \LV \to \VV$ given by $\mathcal{A}(X)=Xa$ for all $X \in \LV$. Then, to obtain the limiting and horizon subdifferential of $\Psi$ at $I$, we apply the chain rule given in Proposition\,\ref{pr:chain}. To do so, we should verify that \eqref{cond:chain} is satisfied. Indeed, $\Theta$ is regular at $Ia=a$ by the hypothesis. Moreover, from \eqref{adjoint}, we know that $\mathcal{A}^\ast x=x\otimes a$ for all $x\in \VV$. Thus, $x\in{\rm Ker}(\mathcal{A}^\ast)$ if and only if $\ip{a}{v}x = 0$ for all $v\in\VV$. It follows that $x=0$ as $a$ is assumed to be nonzero. We have proved that ${\rm Ker}(\mathcal{A}^\ast)=\{0\}$ and $\eqref{cond:chain}$ is satisfied. Then, Proposition\,\ref{pr:chain} says that $\Psi$ is regular at $I$ and
    \begin{equation}\label{chain}
        \partial\Psi(I)=\{v\otimes a:v\in\partial\Theta(a)\},\quad \partial^\infty\Psi(I)=\{v\otimes a:v\in\partial^\infty\Theta(a)\}.
    \end{equation}
    Recall that from Definition\,\ref{def:reg}, the regularity of $\Psi$ at $I$ implies that $\Psi$ is lower semicontinuous around $I$. Furthermore, as $\VV$ is finite-dimensional, $\AutV$ is closed in $\LV$ equipped with the topology induced by the operator norm. Thus, to apply Theorem\,\ref{th:opt} it remains to verify the qualification condition \eqref{qconstraint}. From Proposition\,\ref{tangentnormal}, we know that $N(I;\AutV) = \DerV^\perp$. If $H \in \partial^\infty\Psi(I)\cap \DerV^\perp$, then we have $H= v\otimes a$ for some $v\in\partial^\infty \Theta(a)$, and $ v\otimes a\in\DerV^\perp$. Hence, we see that $a$ and $v$ operator commute thanks to Proposition\,\ref{comder}. Therefore, condition \eqref{quali} implies that $v=0$; hence, the hypothesis of Theorem\,\ref{th:opt} is satisfied. Then, from \eqref{fermat} we obtain
    \[ 0\in\partial\Psi(I)+N(I;\AutV). \]
    Because of the characterization of $\partial\Psi(I)$ given in \eqref{chain} and the characterization of $N(I;\AutV)$ given in Proposition\,\ref{tangentnormal}, we deduce that there exists $v\in\partial\Theta(a)$ such that $-v\otimes a\in \DerV^\perp$. Then,  from Proposition\,\ref{comder}, we conclude that $a$ and $v$ operator commute.
\end{proof}

When $\Theta:\VV\to\mathbb{R}$ is Clarke regular at $a$ (cf. Definition\,\ref{Clarke reg}), we have that the qualification condition \eqref{quali} holds automatically. Recall also that in this case, $\partial\Theta(a)$ coincides with $\partial^\circ\Theta(a)$, the Clarke subdifferential of $\Theta$ at $a$. Hence, Theorem\,\ref{th:main} becomes:
\begin{corollary}\label{cor:mainclarke}
    Let $\mathcal V$ be a Euclidean Jordan algebra and let $\Theta : \VV \to \mathbb{R}$ be a function. Let $F : \VV \to \mathbb{R}$ be a weakly spectral function and let $\mathcal{E} \subseteq \VV$ be a weakly spectral set. Let $a \in \VV$ be such that $\Theta$ is Clarke regular at $a$. Suppose further that $a$ is a local minimizer of the map $x \in \mathcal{E} \mapsto \Theta(x) + F(x)$. Then, $a$ operator commutes with some element in $\partial^\circ\Theta(a)$. 
\end{corollary}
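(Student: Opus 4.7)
The plan is to deduce this corollary directly from Theorem~\ref{th:main} by checking that Clarke regularity of $\Theta$ at $a$ automatically supplies both the regularity hypothesis and the qualification condition \eqref{quali} appearing in that theorem, and by identifying the limiting subdifferential with the Clarke subdifferential.

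First, I would recall the discussion immediately following Definition~\ref{Clarke reg}. Since Clarke regularity requires $\Theta$ to be locally Lipschitz around $a$, a standard fact from \cite[Theorem~8.6]{Rock-Wets} (quoted right after the definition) gives $\partial^\infty \Theta(a) = \{0\}$. Combined with the coincidence $\wpartial \Theta(a) = \partial \Theta(a) = \partial^\circ \Theta(a)$ and the identity $(\wpartial \Theta(a))^\infty = \{0\} = \partial^\infty \Theta(a)$, this shows that $\Theta$ satisfies Definition~\ref{def:reg}, i.e.\ it is subdifferentially regular at $a$.

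Next, I would observe that the qualification condition \eqref{quali} is vacuously satisfied: it requires that any $v \in \partial^\infty \Theta(a)$ which operator commutes with $a$ must be zero, and here $\partial^\infty \Theta(a) = \{0\}$ forces $v = 0$ regardless of the commutativity clause. So both hypotheses needed to invoke Theorem~\ref{th:main} are in place. Applying that theorem produces some $v \in \partial \Theta(a)$ which operator commutes with $a$; using $\partial \Theta(a) = \partial^\circ \Theta(a)$ once more, we conclude that $a$ operator commutes with some element of the Clarke subdifferential $\partial^\circ \Theta(a)$.

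There is no real obstacle here: the corollary is a clean specialization, and essentially all the bookkeeping is done in the paragraph immediately preceding its statement. The only thing to be careful about is to explicitly cite the two facts from nonsmooth analysis being used, namely that local Lipschitzness implies $\partial^\infty \Theta(a) = \{0\}$ and that Clarke regularity forces the three subdifferentials $\wpartial \Theta(a)$, $\partial \Theta(a)$, $\partial^\circ \Theta(a)$ to agree, so that the reader sees transparently why Theorem~\ref{th:main} applies and why the conclusion can be phrased in terms of $\partial^\circ \Theta(a)$.
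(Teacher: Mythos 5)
Your proposal is correct and mirrors the paper's own reasoning: the corollary is stated in the paper immediately after the paragraph observing that Clarke regularity implies subdifferential regularity, forces $\partial^\infty\Theta(a)=\{0\}$ (hence the qualification condition~\eqref{quali} holds trivially), and makes $\partial\Theta(a)=\partial^\circ\Theta(a)$, so Theorem~\ref{th:main} applies directly. Your write-up spells out the same chain of facts, just slightly more explicitly.
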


\begin{example}
    Let us consider the problem 
    \begin{equation}\label{kappa2}
        \left\{ \begin{array}{ll}
        {\rm Minimize} & \Theta(x) := \kappa(x+a) \\
        \text{subject to} & x \in \EE,  
        \end{array} \right.
    \end{equation}
    given in Example\,\ref{ex:kappa}, where $\EE\subset\VV$ is weakly spectral. Let $\bx$ be a local solution of \eqref{kappa2} and suppose that $\bx+a$ has positive eigenvalues (that is, $\bx+a$ is in the interior of the symmetric cone $\{z\circ z:z\in\VV\}$). Seeger proved in \cite[Section\,3]{Seeger2022} that $\kappa$ is Clarke regular at $\bx+a$. Hence, $\Theta$ is also Clarke regular at $\bx$. Thus, by applying the chain rule given in Proposition\,\ref{pr:chain} we get $\partial^\circ\Theta(\bx)=\partial^\circ\kappa(\bx+a)$. Therefore, from Corollary\,\ref{cor:mainclarke}, we deduce that $\bx$ operator commute with some element of $\partial^\circ\kappa(\bx+a)$. The computation of this subdifferential is given in \cite[Theorem\,1]{Seeger2022}.
\end{example}

Now, we specialize Theorems \ref{th:mainmax} and \ref{th:main} to the case when $\Theta$ is convex. We say that a convex function $\Theta : \VV \to \bR$ is \emph{proper} if $\Theta(x)>-\infty$ for all $x\in\VV$ and ${\rm dom}\,\Theta:=\{x\in\VV : \Theta(x)<\infty\}$ is nonempty. For a proper convex function $\Theta$, the subdifferential of $\Theta$ at $x\in{\rm dom}\,\Theta$ coincides with the Fréchet subdifferential and limiting subdifferential of $\Theta$ at $x$ (cf. \cite[Proposition\,1.2]{Kruger}). That is, for all $x\in{\rm dom}\,\Theta$,
\begin{equation}\label{equal}
 \partial \Theta(x) = \wpartial \Theta(x) = \set{v \in \VV}{\Theta(x) - \Theta(a) - \ip{v}{x-a} \geq 0 \; \text{for all} \; x \in \VV}.
 \end{equation}
Let ${\rm ri}\,C$ and ${\rm int}\,C$ denote the \emph{relative interior} and \emph{interior} of the set $C\subset\VV$, respectively. When  $x\in{\rm ri}({\rm dom}\,\Theta)$, $\partial \Theta(x)$ is nonempty (closed and convex). Thus, Theorem\,\ref{th:mainmax} becomes: 
  \begin{corollary}\label{cor:mainmax}
    Let $\Theta:\VV\to\bR$ be a proper convex function. Let $F:\VV\to\mathbb{R}$ be a weakly spectral function, and let $\mathcal E\subseteq \VV$ be a weakly spectral set. Let $a\in{\rm ri}({\rm dom}\,\Theta)$. Suppose that $a$ is a local maximizer of the map
    $
     x\in \mathcal E \mapsto \Theta(x)+F(x).
     $
    Then, $a$ operator commutes with each element of $\partial\Theta(a)$.
\end{corollary}

Suppose that $x\in{\rm int}({\rm dom}\,\Theta)$. It is known that in this case $\Theta$ is locally Lipschitz around $x$, and $\partial^\infty \Theta(x) = \big(\wpartial \Theta(x)\big)^{\infty} = \{0\}$ (cf. \cite[Theorem\,2.29]{MorNam}). Thus, $\Theta$ is regular at $x$. Furthermore, the condition \eqref{quali} is satisfied at $a\in {\rm int}({\rm dom}\,\Theta)$ because $\partial^\infty\Theta(a) = \{0\}$. Thus, Theorem\,\ref{th:main} becomes:
\begin{corollary}\label{cor:main}
    Let $\Theta:\VV\to\bR$ be a proper convex function. Let $F:\VV\to\mathbb{R}$ be a weakly spectral function, and let $\mathcal E\subseteq \VV$ be a weakly spectral set. Let $a\in {\rm int}({\rm dom}\,\Theta)$. Suppose that $a$ is a local minimizer of the map
    $
     x\in \mathcal E \mapsto \Theta(x)+F(x).
     $
    Then, $a$ operator commutes with some element in $\partial\Theta(a)$.
\end{corollary}

\section{Optimization of shifted spectral functions/norms over weakly spectral sets}\label{applications}
%
The purpose of this section is to apply our commutation principles to characterize the local solutions of the problem
    \begin{equation*}\label{minimaxi}
    \left\{\begin{array}{ll}
         {\rm Minimize/Maximize}&  F (x-a) \\
       \text{subject to}& x\in\EE,  
    \end{array}\right.
    \end{equation*}
where $a\in \VV$, $\EE\subset\VV$ is weakly spectral, and $F:\VV\to\mathbb{R}$ is a spectral function that is either a strictly convex function or a strictly convex norm. That $F$ is a \emph{strictly convex norm} in $\VV$ means that it is a norm in $\VV$ in which the unit ball of $\VV$ with respect to this norm is strictly convex. That is, for any distinct two points $x$ and $y$ on the boundary of the unit ball, their midpoint must be strictly inside the unit ball. This is specified in the next definition.

\begin{definition}
    We say that $F : \VV \to \mathbb{R}$ is a \emph{strictly convex norm} in $\VV$ if it is a norm in $\VV$ satisfying $F(x + y) < 2$ for any distinct nonzero $x, y \in \VV$ such that $F(x) = F(y) = 1$.
\end{definition}

\begin{example}
    Let $p\geq 1$. The Schatten $p$-norm of $x\in\VV$ is defined as 
    \[ \Norm{x}_p := \big( \abs{\lambda_1(x)}^p + \cdots + \abs{\lambda_n(x)}^p \big)^\frac{1}{p}. \]
    That is, $\Norm{x}_p = \Vert \lambda(x)\Vert_p$, where $\norm{\,\cdot\,}_p$ is the $p$-norm on $\Rn$. For $p>1$, the Minkowski inequality ensures that $\Norm{\,\cdot\,}_p$ is a strictly convex norm on $\VV$. However, the norm $\Norm{x}_1 = \abs{\lambda_1(x)} + \cdots + \abs{\lambda_n(x)}$ is not strictly convex. Indeed, if $\{e_1, \ldots, e_n\}$ is a Jordan frame of $\VV$, we have that $e_1$ and $e_2$ are nonzero distinct elements of $\VV$ satisfying $\Norm{e_1}_1 = \Norm{e_2}_1 = 1$ but $\Norm{e_1 + e_2}_1 = 2$.
\end{example}

From now on, whenever we say that \emph{$F$ is a strictly convex spectral function/norm in $\VV$}, we refer to that $F$ is spectrally defined and either is strictly convex (in the usual sense) or is a strictly convex norm. As $F$ is spectrally defined, there exists a permutation invariant function $f:\Rn\to\mathbb{R}$ such that $F=f\circ \lambda$. In this case, the strict convexity property of $F$ is transferred to $f$ and vice versa, see Proposition\,\ref{prop:strictconvexity} in the Appendix for its proof. Furthermore, in the Proposition\,\ref{prop:norm} given in the Appendix, it is shown that $F$ is a strict convexity norm on $\VV$ if and only if so is $f$ on $\Rn$.

Let us assume that $F=f\circ\lambda$ is a strictly convex spectral function/norm in $\VV$. Then, $F$ is always convex. Thus, from \eqref{equal} we have that $\partial F(x)$ is the subdifferential of $F$ at $x$. Moreover, its subdifferential is characterized as (cf. \cite{Baes})
\begin{equation}\label{subdifspec}
    \partial F(x)=\{v\in\VV: \lambda(v)\in\partial f(\lambda(x)),\;\text{$v$ and $x$ strongly operator commute}\}.
\end{equation}
The assumptions of strict convexity or norm strict convexity allow us to obtain the following sort of strict monotonicity of the eigenvalues of the elements of $\partial F(x)$ with respect to the eigenvalues of $x$. Its proof is given in the Appendix.
\begin{lemma}\label{monot}
    Let $F$ be a strictly convex spectral function/norm in $\VV$. Let $x\in \VV$ and $v\in\partial F(x)$. Then, for all $i,j \in \{1,2,\ldots,n\}$,
    \[ \lambda_i(x) > \lambda_j(x) \implies \lambda_i(v) > \lambda_j(v). \]
\end{lemma}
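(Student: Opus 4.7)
The plan is to reduce the claim to a statement about the symmetric function $f:\Rn\to\mathbb R$ via the subdifferential formula \eqref{subdifspec}, which gives $\lambda(v)\in\partial f(\lambda(x))$ together with strong operator commutativity of $v$ and $x$, and then exploit strict convexity or norm strict convexity of $f$. I argue by contradiction: assume $\lambda_i(x)>\lambda_j(x)$ (which forces $i<j$ since $\lambda(x)$ is listed in nonincreasing order) but $\lambda_i(v)=\lambda_j(v)$. Note that $\lambda(x)\neq 0$, for otherwise $\lambda_i(x)=\lambda_j(x)=0$, contradicting the hypothesis. Let $P$ denote the permutation matrix of the transposition $(i,j)$, so that $P\lambda(v)=\lambda(v)$ while $P\lambda(x)\neq\lambda(x)$.

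The key step is to show that $\lambda(v)$ is a common subgradient of $f$ at both $\lambda(x)$ and $P\lambda(x)$. This rests on the simple fact that since $f$ is symmetric (i.e., $f\circ P=f$) and $P$ is orthogonal, the subgradient inequality immediately yields $u\in\partial f(y)\Longleftrightarrow Pu\in\partial f(Py)$. Applied to $\lambda(v)\in\partial f(\lambda(x))$ together with $P\lambda(v)=\lambda(v)$, this gives $\lambda(v)\in\partial f(P\lambda(x))$ as well.

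It remains to deduce a contradiction from the existence of a common subgradient at the distinct points $\lambda(x)$ and $P\lambda(x)$. If $f$ is strictly convex, I will add the two subgradient inequalities at $\lambda(x)$ and $P\lambda(x)$ and compare with the strict midpoint inequality for $f$; this standard argument shows $\partial f(y)\cap\partial f(y')=\emptyset$ whenever $y\neq y'$, producing the desired contradiction. If $f$ is a strictly convex norm, I will recall that for $y\neq 0$ the subgradients of $f$ at $y$ are precisely the linear functionals bounded above by $f$ which attain $f(y)$ at $y$; a common subgradient at two nonzero points $y$ and $y'$ with $f(y)=f(y')$ therefore supports the unit ball of $f$ at both normalized vectors $y/f(y)$ and $y'/f(y')$, and strict convexity of the unit ball identifies these points, so $y=y'$. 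Applied with $y=\lambda(x)$ and $y'=P\lambda(x)$ (which have equal $f$-value by symmetry), this again forces $\lambda(x)=P\lambda(x)$. The main obstacle I anticipate is the strictly convex norm case—invoking the correct characterization of subgradients of a norm and the unit-ball reformulation of norm strict convexity without getting lost in normalization factors—while the strictly convex case is essentially immediate.
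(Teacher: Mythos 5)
Your argument is correct, and it is a mild reorganization of the paper's proof rather than a truly different one: both reduce to the symmetric function $f$ via \eqref{subdifspec}, both exploit the transposition $P$ swapping coordinates $i$ and $j$ together with the permutation invariance of $f$, and both invoke strict convexity or norm strict convexity of $f$. The paper argues \emph{directly}: it plugs $u = P\lambda(x)$ (respectively $u = \tfrac12(P\lambda(x)+\lambda(x))$ in the norm case) into the subgradient inequality $\ip{\lambda(v)}{u-\lambda(x)} \le f(u)-f(\lambda(x))$ and derives the strict inequality $\ip{\lambda(v)}{P\lambda(x)-\lambda(x)}<0$, which it then unpacks as $(\lambda_i(x)-\lambda_j(x))(\lambda_i(v)-\lambda_j(v))>0$. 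You instead argue by \emph{contradiction}, first observing that under the contradiction hypothesis $P\lambda(v)=\lambda(v)$, so symmetry of $f$ forces $\lambda(v)\in\partial f(\lambda(x))\cap\partial f(P\lambda(x))$, and then invoking the fact that a strictly convex function (or a strictly convex norm, away from the origin) cannot have a common subgradient at two distinct points. What your version buys is a cleaner modular structure — the impossibility of shared subgradients is a self-contained, reusable reformulation of strict convexity — and it makes transparent why the hypothesis $\lambda_i(v)=\lambda_j(v)$ (rather than $\lambda_i(v)<\lambda_j(v)$, ruled out by the nonincreasing ordering) is the only case to exclude. What the paper's version buys is a uniform treatment of both cases inside a single inequality \eqref{subdiff} and the explicit strict bound, avoiding the duality-flavored detour through supporting hyperplanes of the unit ball in the norm case. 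One small thing worth making explicit if you write this up: in the norm case your support-at-the-unit-sphere characterization of $\partial f$ is only valid at nonzero points, so the observation $\lambda(x)\neq 0$ (hence $f(\lambda(x))>0$) is doing real work there, not just in excluding the degenerate case.
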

Let $v\in\partial F(x)$. From \eqref{subdifspec} we have that $x$ strongly operator commutes with $v$. If, in addition, $v$ operator commutes with some $w\in\VV$, then thanks to Lemma\,\ref{monot} we can deduce that $x$ and $w$ operator commutes as well. This transitive property is stated in the next proposition, whose proof is also given in the Appendix.
\begin{proposition}\label{prop:transitive}
    Let  $a, b, c \in \VV$. Suppose that $a$ and $b$ strongly operator commute and $b$ and $c$ operator commute. If
    \[ \lambda_i(a) > \lambda_j(a) \implies \lambda_i(b) > \lambda_j(b) \]
    for all $i,j \in \{1,2,\ldots,n\}$, then $a$ and $c$ operator commute.
\end{proposition}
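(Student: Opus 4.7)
The plan is to peel off the two operator-commutation relations one at a time, using the hypothesis to force $a$ to be constant on each eigenspace of $b$, and then to transfer the commutation through the spectral projections of $b$.

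Since $a$ and $b$ strongly operator commute, there is a Jordan frame $\{e_1,\ldots,e_n\}$ of $\VV$ with
\[ a = \sum_{i=1}^n \lambda_i(a)\,e_i, \qquad b = \sum_{i=1}^n \lambda_i(b)\,e_i. \]
Let $\mu_1 > \cdots > \mu_k$ be the distinct eigenvalues of $b$ and partition $\{1,\ldots,n\}$ into the blocks $I_s = \{i : \lambda_i(b) = \mu_s\}$. Set $P_s := \sum_{i \in I_s} e_i$, so that $P_1,\ldots,P_k$ are pairwise Jordan-orthogonal idempotents summing to the unit and $b = \sum_{s=1}^k \mu_s P_s$. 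The hypothesis, read in contrapositive form, says that whenever $\lambda_i(b)=\lambda_j(b)$ one also has $\lambda_i(a)=\lambda_j(a)$. Hence $\lambda_i(a)$ takes a single value $\alpha_s$ across each block $I_s$, which yields the key identity
\[ a = \sum_{s=1}^k \alpha_s P_s. \]

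Next, since $b$ and $c$ operator commute, there is a Jordan frame $\{f_1,\ldots,f_n\}$ such that $b = \sum_i \beta_i f_i$ and $c = \sum_i \gamma_i f_i$, where $(\beta_i)$ is a rearrangement of $(\lambda_i(b))$. Put $J_s = \{i : \beta_i = \mu_s\}$; by the uniqueness of the spectral projections of the single element $b$ (after collecting equal eigenvalues), the idempotent $\sum_{i \in J_s} f_i$ must coincide with $P_s$. Thus $P_s$ and $c$ are simultaneously expressible as real linear combinations of the one Jordan frame $\{f_1,\ldots,f_n\}$ (with $0/1$ coefficients for $P_s$), which by the standard characterization of operator commutativity recalled in Section\,\ref{preliminaries} shows that $P_s$ and $c$ operator commute for every $s$.

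Since $L_a = \sum_s \alpha_s L_{P_s}$ and each $L_{P_s}$ commutes with $L_c$, we conclude $L_a L_c = L_c L_a$, so $a$ and $c$ operator commute. The main subtle point is the identification $P_s = \sum_{i \in J_s} f_i$: this is the statement that the spectral idempotents of an element of $\VV$, grouped by equal eigenvalue, are intrinsic to the element and independent of any Jordan frame refining the decomposition. This is a standard consequence of the spectral theorem in Euclidean Jordan algebras, but it is the pivot that lets the hypothesis on $a$ and $b$ travel across to $a$ and $c$.
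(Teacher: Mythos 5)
Your proof is correct and takes essentially the same route as the paper's: both hinge on writing $a$ as a linear combination of the spectral idempotents $P_s$ of $b$ (using the eigenvalue-monotonicity hypothesis), and then invoking uniqueness of those idempotents to re-express everything in the Jordan frame that simultaneously diagonalizes $b$ and $c$. Your closing step via linearity, $L_a = \sum_s \alpha_s L_{P_s}$ with each $L_{P_s}$ commuting with $L_c$, is a minor streamlining of the paper's final move, which instead assembles an explicit spectral decomposition of $a$ in the new frame, but the substance is identical.
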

Now, we present the main result of this section.
\begin{theorem}\label{th:comaAb}
    Let $a\in\VV$ and let $\EE\subset\VV$ be weakly spectral. Let $F$ be a strictly convex spectral function/norm in $\VV$. Suppose that $\bar{x}$ is a local  minimizer/maximizer of
    \begin{equation}\label{orbfunc}
        x \in \mathcal E \mapsto F(x-a).
    \end{equation}
     Then, $\bar{x}$ and $a$ operator commute.
\end{theorem}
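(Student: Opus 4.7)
The plan is to reduce this to the nonsmooth commutation principles already proved (Corollaries \ref{cor:mainmax} and \ref{cor:main}), and then use the transitivity-type result in Proposition \ref{prop:transitive} to transfer operator commutativity from the subgradient to the shift vector $a$.

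Define $\Theta : \VV \to \mathbb R$ by $\Theta(x) := F(x-a)$. Since $F$ is either a strictly convex function or a norm, $F$ is proper convex and finite-valued on $\VV$; hence $\Theta$ is proper convex with $\mathrm{dom}\,\Theta = \VV$, so $\bar x \in \mathrm{int}(\mathrm{dom}\,\Theta) = \mathrm{ri}(\mathrm{dom}\,\Theta)$. A direct verification from the definition of the convex subdifferential (or the chain rule for a translation) gives $\partial \Theta(\bar x) = \partial F(\bar x - a)$. Now the problem \eqref{orbfunc} is of the form $x \in \EE \mapsto \Theta(x) + F_0(x)$ with $F_0 \equiv 0$, which is trivially a weakly spectral function.

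Applying Corollary \ref{cor:mainmax} in the maximizer case, or Corollary \ref{cor:main} in the minimizer case, we obtain an element $v \in \partial F(\bar x - a)$ (every such $v$ in the maximizer case, some such $v$ in the minimizer case) with the property that $\bar x$ and $v$ operator commute. From the subdifferential formula \eqref{subdifspec} for spectral convex functions, $v$ and $\bar x - a$ strongly operator commute and $\lambda(v) \in \partial f(\lambda(\bar x - a))$. Lemma \ref{monot}, applied to $x = \bar x - a$ and $v$, yields the strict monotonicity implication
\[
\lambda_i(\bar x - a) > \lambda_j(\bar x - a) \;\Longrightarrow\; \lambda_i(v) > \lambda_j(v),
\qquad i,j \in \{1,\ldots,n\}.
\]

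With these three ingredients in hand we invoke Proposition \ref{prop:transitive} with the assignment $a \leftarrow \bar x - a$, $b \leftarrow v$, $c \leftarrow \bar x$: $\bar x - a$ and $v$ strongly operator commute, $v$ and $\bar x$ operator commute, and the monotonicity hypothesis is satisfied. The conclusion is that $\bar x - a$ and $\bar x$ operator commute. Since operator-commuting elements admit a simultaneous Jordan-frame decomposition, $\bar x$ also operator commutes with $\bar x - (\bar x - a) = a$, which is the desired conclusion.

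The only genuinely nontrivial step is the passage from ``$\bar x$ commutes with some $v \in \partial F(\bar x-a)$'' to ``$\bar x$ commutes with $a$''; this is exactly where strict convexity of $F$ enters, through Lemma \ref{monot} and the transitive Proposition \ref{prop:transitive}. Everything else is a straightforward application of machinery already in place, and the maximizer and minimizer cases are handled uniformly once the convex subdifferential $\partial F(\bar x-a)$ has been produced.
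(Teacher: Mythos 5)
Your proof is correct and follows essentially the same route as the paper: define $\Theta(x) = F(x-a)$, apply Corollary~\ref{cor:mainmax} or \ref{cor:main} to get a subgradient $v \in \partial F(\bar x - a)$ operator commuting with $\bar x$, invoke \eqref{subdifspec}, Lemma~\ref{monot}, and Proposition~\ref{prop:transitive} to conclude that $\bar x - a$ and $\bar x$ operator commute, and then pass to commutativity of $\bar x$ with $a$. The only place you deviate is the final step: you argue via a simultaneous Jordan-frame decomposition (if $\bar x = \sum \alpha_i e_i$ and $\bar x - a = \sum \beta_i e_i$ in a common frame, then $a = \sum(\alpha_i - \beta_i) e_i$ lives in the same frame), whereas the paper uses the linearity of the derivation criterion of Proposition~\ref{comder}: for any $D \in \DerV$, $\ip{a}{D\bar x} = \ip{\bar x}{D\bar x} - \ip{\bar x - a}{D\bar x} = 0$, since $\bar x$ commutes with both $\bar x$ and $\bar x - a$. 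Both are valid expressions of the fact that the commutator $L_{(\cdot)}L_{(\cdot)} - L_{(\cdot)}L_{(\cdot)}$ is bilinear; your version is slightly more hands-on, the paper's slightly slicker. There is no gap.
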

\begin{proof}
    We see that the objective function $\Theta(x):= F(x-a)$ is convex because it is the composition of a convex function $F$ with the affine function $x \mapsto x-a$. Hence, we can use the chain rule given in Proposition\,\ref{pr:chain} to deduce that
    \begin{equation}\label{nablaTF}
        \partial\Theta(x)=\partial F(x-a).
    \end{equation}
    Suppose now that $\bx$ is a local optimizer of \eqref{orbfunc}. If $\bx$ is a local maximizer of \eqref{orbfunc}, from Corollary\,\ref{cor:mainmax} we deduce that $\bx$ operator commutes with each element of $\partial\Theta(\bx)$. On the other hand, if $\bx$ is a local minimizer of \eqref{orbfunc}, from Corollary\,\ref{cor:main} we have that $\bx$ operator commutes with some element in $\partial\Theta(\bx)$. In any case, from \eqref{nablaTF}, we conclude that there exists $v\in\partial F(\bx-a)$ such that 
    \begin{equation}\label{comm1}
        \text{$v$ and $\bx$ operator commute.}
    \end{equation}
    Furthermore, from the characterization \eqref{subdifspec} we deduce that
    \begin{equation}\label{comm2}
    \text{$\bx-a$ and $v$ strongly operator commute.}
    \end{equation}
    Thanks to Lemma\,\ref{monot} and Proposition\,\ref{prop:transitive} applied to \eqref{comm1} and \eqref{comm2}, we obtain that $\bx-a$ and $\bx$ operator commute. Now, for any $D \in \DerV$, we have
    \begin{equation}\label{equalD}
        \ip{a}{D \bx} = \ip{\bx}{D\bx} - \ip{\bx - a}{D\bx}.
    \end{equation}
    Proposition\,\ref{comder} implies that the right expression of \eqref{equalD} vanishes because $\bx$ operator commutes with itself and with $\bx-a$; hence, $\ip{a}{D\bx} = 0$. Since $D \in \DerV$ is arbitrary, we conclude that $\bx$ and $a$ operator commute.
\end{proof}

\section*{Appendix}

\begin{proof}[Proof of Proposition\,\ref{normmanifold}.]
    $(a)$ Let $h\in T(p;\MM)$ so that there exists a smooth curve $\gamma:(-\epsilon,\epsilon)\to \MM$ with $\epsilon>0$, such that $\gamma(0)=p$ and $\gamma^\prime(0)=h$. We take a sequence $\{t_k\}\subset (0,\epsilon)$ such that $t_k\downarrow 0$. By definition of $\gamma$ we have that $\gamma(t_k)\in\MM$ for all $k$, $\gamma(t_k)\to \gamma(0)=p$, and 
    \[ h = \gamma'(0) = \lim_{t\to 0}\frac{\gamma(t)-p}{t} = \lim_{k\to \infty} \frac{\gamma(t_k)-p}{t_k}. \]
    It means that $h\in \widehat{T}(p; \MM)$ and we have the inclusion $T(p; \MM) \subseteq \widehat{T}(p;\MM)$. Let us prove the reverse inclusion. In \cite[Theorem\,4.6]{Gallier} we find equivalent characterizations of a smooth manifold. In particular, we have that there is an open subset $U\subseteq \VV$ with $p\in U$, a finite dimensional real inner product space $\WW$, and a smooth submersion $f: U \to \WW$ (that is, $f$ is continuously differentiable in $U$ and the derivative $Df(a)$ is surjective for all $a\in U$) such that
    \begin{equation}\label{subm}
        \MM \cap U = \set{x \in U}{f(x) = 0}.
    \end{equation}
    Then, by applying \cite[Proposition\,5.38]{Lee} to the characterization \eqref{subm}, we have that the tangent space to $\MM$ at $p$ coincides with the kernel of $Df(p)$. That is,
    \begin{equation}\label{tangentdev}
        T(p; \MM) = \set{h \in \VV}{Df(p)h = 0}.
    \end{equation}
    Now, let $h\in \widehat{T}(p; \MM)$. If $h=0$ then $h\in T(p;\MM)$ because $T(p;\MM)$ is a linear subspace. Suppose that $h \neq 0$. Then, there exist sequences $\{x^k\}\subset\MM$, $\{t_k\}\subset\mathbb{R}_+$ such that $x^k\to p$, $t_k\downarrow 0$ and $h^k:=\frac{x^k-p}{t_k}\to h$. The definition of $Df(p)$ says that
    \begin{equation}\label{derivdef}
        \lim_{x\to p} \frac{\Vert f(x)-f(p)-Df(p)(x-p) \Vert}{\Vert x-p \Vert} = 0.
    \end{equation}
    Observe that $f(p)=0$ and $f(x^k)=0$ because $x^k = p + t_kh^k \in \MM \cap U$ for $k$ large enough. Then, by applying \eqref{derivdef} to the sequence $\{x^k\}$ we have
    \begin{equation*}
        0 = \lim_{k\to\infty} \frac{\Vert Df(p)h^k \Vert}{\Vert h^k \Vert} = \frac{\Vert Df(p)h \Vert}{\Vert h \Vert}.
    \end{equation*}
    Thus, $Df(p)h = 0$ and from \eqref{tangentdev}, we deduce $h \in T(p;\MM)$. We have proved $T(p;\MM) \supseteq \widehat{T}(p; \MM)$.
    
    $(b)$ $\widehat{N}(p; \MM)$ is the polar cone of $\widehat{T}(p;\MM)$ because of \eqref{tanchar}. Moreover, from the previous item, the polar cone of $\widehat{T}(p;\MM)$ is the orthogonal space of $T(p;\MM)$. Thus, the result follows.

    $(c)$ We know that $N(p;\MM)\supseteq\widehat{N}(p;\MM)$. Let us prove the reverse inclusion. Let $v\in N(p;\MM)$. Then, there exists sequences $\{x^k\}\subset \MM$ and $\{v^k\}\subset \VV$ such that $v^k\in \widehat{N}(x^k;\MM)$, $x^k\to p$ and $v^k\to v$. Because of item $(b)$ and from \eqref{tangentdev} we have that 
    \begin{equation}\label{comlink}
        v^k\in \big[ {\rm Ker}(Df(x^k)) \big]^\perp = {\rm Im}(Df(x^k)^\ast),
    \end{equation}
    for $k$ large enough and $f$ is given as in the proof item $(a)$. Since $Df(x^k)$ is surjective we have that $Df(x^k)^\ast$ is injective. Thus, 
    \[ \{Df(x^k)^\ast c_1, \ldots, Df(x^k)^\ast c_m\} \]
    is a basis for ${\rm Im}(Df(x^k)^\ast)$, where $\{c_1,\ldots,c_m\}$ is a fixed orthonormal basis of $\WW$. Then, from \eqref{comlink} we have that there exist sequences of scalars $\{\alpha^k_i\}$ for $i=1,\ldots,m$, such that
    \[ v^k = \alpha^k_1 Df(x^k)^\ast c_1 + \cdots + \alpha^k_m Df(x^k)^\ast c_m. \]
    From the continuity of $Df$, we have $Df(x^k)^\ast c_i \to Df(p)^\ast c_i$ for all $i=1,\ldots,m$. Furthermore, $\{Df(p)^\ast c_1,\ldots,Df(p)^\ast c_m\}$ is a basis for ${\rm Im}(Df(p)^\ast)$. This fact together with the convergence of $\{v^k\}$ ensure that  $\{\alpha^k_i\}$ is bounded. Thus, by passing to a subsequence, we may assume that  $\alpha_i^k\to\alpha_i$ for every $i=1,\ldots,m$, where $\alpha_1, \ldots, \alpha_m$ are appropriate scalars. Therefore,
    \[ v^k \to \alpha_1 Df(p)^\ast c_1 + \cdots + \alpha_m Df(p)^\ast c_m = v, \]
    where the last equality is due to the uniqueness of limits. This implies $v\in {\rm Im}(Df(p)^\ast)$. Then, $N(p;\MM)\subseteq {\rm Im}(Df(p)^\ast) = \widehat{N}(p;\MM)$.
\end{proof}

For $u \in \Rn$, $u^{\downarrow}$ is the vector obtained from $u$ by arranging its entries in non-increasing order. For $u,v\in\Rn$, one says that $u$ is majorized by $v$, in symbol $u\prec v$, if $u^{\downarrow}_1+\cdots+ u^{\downarrow}_k\leq v^{\downarrow}_1+\cdots+ v^{\downarrow}_k$ for every $k=1,\ldots,n-1$, and $u^{\downarrow}_1+\cdots+ u^{\downarrow}_n=v^{\downarrow}_1+\cdots+ v^{\downarrow}_n$ (cf. \cite[Definition\,1.A.1]{MOA}). A function $f : \Rn \to \mathbb{R}$ is said to be Schur-convex provided $u \prec v$ implies $f(u) \leq f(v)$ for $u, v \in \Rn$. Furthermore, $f$ is said to be strictly Schur-convex provided $u \prec v$ and $u^\downarrow\neq v^\downarrow$ imply $f(u) < f(v)$ for $u, v \in \Rn$. 

The following lemmas will be useful in the sequel.
\begin{lemma} \label{lem:strictSchur}
    Let $f : \Rn \to \mathbb{R}$ be a strictly convex symmetric function/norm. Then, $f$  is strictly Schur-convex.
\end{lemma}
\begin{proof}
The proof for the case of $f$ to be a strictly convex symmetric function is given in \cite[Theorem\,3.C.2.c]{MOA}. Suppose that $f$ is a strictly convex symmetric norm. Let $u,v\in\mathbb R^n$ be such that $u\prec v$ and $u^\downarrow\neq v^\downarrow$. Observe that $f$ is Schur-convex because it is a convex symmetric function (cf. \cite[Proposition\,3.C.2]{MOA}). Then, $f(u)\leq f(v)$. We need to prove that this inequality is strict. Reasoning by contradiction, suppose that $f(u)=f(v)$. Since $u\prec v$, we have that $u$ is in the convex hull of $\{Pv : P \in \Sigma_n\}$ with $\Sigma_n$ denoting the set of all permutation matrices of order $n$ (cf. \cite[Theorem\,2.B.6]{MOA}). From the symmetry of $f$ we have $f(Pv)=f(v)$ for all $P \in \Sigma_n$. Hence, $u$ and $Pv$, for $P \in \Sigma_n$, are in the sphere of radius $f(u)$, with respect to the norm $f$, and $u$ is a convex combination of $Pv$, for $P \in \Sigma_n$. Then, the strictly convex norm property of $f$ implies that $u = Pv$ for some $P \in \Sigma_n$. This contradicts the fact that $u^\downarrow\neq v^\downarrow$.
\end{proof}

\begin{lemma} \label{lem:majorization_sum}
    \normalfont{(\!\!\cite[Theorem\,4.5]{Gowda2019})} For any Euclidean Jordan algebra $\VV$ and for $x, y \in \VV$, the following majorization relation holds.
    \begin{equation} \label{eq:majorization_sum}
        \lambda(x + y) \prec \lambda(x) + \lambda(y).
    \end{equation}
\end{lemma}

In \cite{Baes}, Baes observed that for a spectral function $F : \VV \to \mathbb{R}$ with its corresponding symmetric function $f : \Rn \to \mathbb{R}$, the convexity of $f$ can be transfered to $F$. Later, in \cite{JG2017}, Jeong and Gowda demonstrated that convexity can indeed be transferred between $f$ and $F$ in both directions. The following propositions present analogous results for strictly convex function and strictly convex norm.

\begin{proposition} \label{prop:strictconvexity}
    Let $ F :\VV \to \mathbb{R}$ be a spectral function with the corresponding symmetric function $f:\Rn \to \mathbb{R}$ such that $F = f \circ \lambda$. In this case, $F$ is strictly convex if and only if so is $f$. 
\end{proposition}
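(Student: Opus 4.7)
The plan is to prove the two implications separately. The easy direction ($F$ strictly convex $\Rightarrow$ $f$ strictly convex) proceeds by realizing any two distinct vectors $u,v\in\Rn$ as eigenvalue tuples of operator-commuting elements of $\VV$: fix a Jordan frame $\{e_1,\ldots,e_n\}$, set $x=\sum_i u_i e_i$ and $y=\sum_i v_i e_i$, and observe that $x\neq y$ since $\{e_i\}$ is linearly independent, while the symmetry of $f$ gives $F(x)=f(u)$, $F(y)=f(v)$, and $F(tx+(1-t)y)=f(tu+(1-t)v)$ for every $t\in(0,1)$. Strict convexity of $F$ at the pair $(x,y)$ then yields the required strict inequality for $f$.

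For the converse direction ($f$ strictly convex $\Rightarrow$ $F$ strictly convex), I would fix distinct $x,y\in\VV$ and $t\in(0,1)$ and invoke the classical Fan-type majorization in Euclidean Jordan algebras, $\lambda(tx+(1-t)y)\prec t\lambda(x)+(1-t)\lambda(y)$, then split into two cases depending on whether this majorization is an equality. Both sides are already arranged in nonincreasing order (the right-hand side as a nonnegative combination of two nonincreasing vectors), so in the strict case the two vectors already differ in their $\downarrow$-rearrangements and Lemma \ref{lem:strictSchur} gives $f(\lambda(tx+(1-t)y))<f(t\lambda(x)+(1-t)\lambda(y))$; chaining with the ordinary convexity of $f$ (implied by its strict convexity) produces $f(\lambda(tx+(1-t)y))<tf(\lambda(x))+(1-t)f(\lambda(y))$, i.e., $F(tx+(1-t)y)<tF(x)+(1-t)F(y)$.

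In the remaining equality case $\lambda(tx+(1-t)y)=t\lambda(x)+(1-t)\lambda(y)=\lambda(tx)+\lambda((1-t)y)$, the characterization of strong operator commutativity recalled in Section \ref{preliminaries} forces $tx$ and $(1-t)y$, and hence $x$ and $y$, to strongly operator commute, so they share a common Jordan frame $\{p_1,\ldots,p_n\}$ in which $x=\sum_i \lambda_i(x)p_i$ and $y=\sum_i \lambda_i(y)p_i$. The assumption $x\neq y$ then rules out $\lambda(x)=\lambda(y)$, so strict convexity of $f$ applied to the distinct vectors $\lambda(x),\lambda(y)$ yields $F(tx+(1-t)y)=f(t\lambda(x)+(1-t)\lambda(y))<tF(x)+(1-t)F(y)$, completing the proof.

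The main obstacle to watch is precisely the equality case above: Lemma \ref{lem:strictSchur} gives no information when the eigenvalue majorization is already an equality, so one must leverage the structural content of that equality, namely strong operator commutativity and the existence of a common Jordan frame, to reduce the desired strict convexity of $F$ to the scalar strict convexity of $f$. Once that reduction is in place, both cases close cleanly, and the rest of the argument reduces to invocations of the Hirzebruch--Ky Fan majorization and the symmetry of $f$.
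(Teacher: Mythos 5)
Your proof is correct, and it takes a genuinely different (though closely related) route from the paper in the direction $f$ strictly convex $\Rightarrow F$ strictly convex. The paper reduces to midpoint strict convexity via continuity and then splits on whether $\lambda(x)=\lambda(y)$: when the eigenvalue vectors differ, Schur-convexity followed by ordinary strict convexity of $f$ closes the case, and when they coincide, the paper argues that $x\neq y$ rules out strong operator commutativity, so $\lambda(x+y)\neq\lambda(x)+\lambda(y)$ and Lemma~\ref{lem:strictSchur} applies. You instead work with a general $t\in(0,1)$ directly and split on whether the majorization $\lambda(tx+(1-t)y)\prec t\lambda(x)+(1-t)\lambda(y)$ is strict or an equality, and in the equality case you invoke the characterization ``$\lambda(a+b)=\lambda(a)+\lambda(b)$ iff $a,b$ strongly operator commute'' to obtain a common Jordan frame and hence $\lambda(x)\neq\lambda(y)$; this is the mirror image of the paper's dichotomy. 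Both splits are exhaustive and both rely on the same ingredients (the Schur-convexity of $f$, Lemma~\ref{lem:strictSchur}, and the strong operator commutativity criterion). Your version has the minor advantage of handling arbitrary $t$ directly rather than passing through midpoint convexity and continuity; the paper's version avoids the small extra observation that both sides of the majorization are already sorted so that $u^{\downarrow}\neq v^{\downarrow}$ is the same as $u\neq v$. The easy direction is essentially identical in both proofs.
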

\begin{proof}
    Suppose $f$ is strictly convex. Since $f$ is (strictly) convex, it is continuous and so is $F$. Thus, it is enough to show that the midpoint strict convexity holds for $F$. To see this, take $x, y \in \VV$ with $x \neq y$. Note that $f$ is Schur-convex as it is symmetric and (strictly) convex. Hence, we see from the Schur-convexity of $f$ applied to \eqref{eq:majorization_sum}, that
    \[ f \Big( \frac{\lambda(x+y)}{2} \Big) \leq f \Big( \frac{\lambda(x) + \lambda(y)}{2} \Big). \]
    Now, if $\lambda(x) \neq \lambda(y)$, the strict convexity of $f$ further implies
    \begin{align*}
        F \Big( \frac{x+y}{2} \Big) 
        = f \Big( \frac{\lambda(x+y)}{2} \Big) 
        & \leq f \Big( \frac{\lambda(x) + \lambda(y)}{2} \Big) \\
        & < \frac{f(\lambda(x)) + f(\lambda(y))}{2}
        = \frac{F(x) + F(y)}{2}. 
    \end{align*}
    On the other hand, suppose $\lambda(x) = \lambda(y)$. As $x \neq y$, $x$ and $y$ cannot strongly operator commute. Hence,  
    \begin{equation} \label{eq:assumption2}
        \lambda(x + y) \neq \lambda(x) + \lambda(y).
    \end{equation}
    Now, from \eqref{eq:majorization_sum} and \eqref{eq:assumption2} we deduce that $f(\lambda(x+y)) < f(\lambda(x) + \lambda(y))$ because $f$ is strictly Schur-convex as stated in Lemma\,\ref{lem:strictSchur}. Thus, in this case, we still have
    \begin{align*}
        F \Big( \frac{x+y}{2} \Big) 
        = f \Big( \frac{\lambda(x+y)}{2} \Big) 
        & < f \Big( \frac{\lambda(x) + \lambda(y)}{2} \Big) \\
        & = \frac{f(\lambda(x)) + f(\lambda(y))}{2}
        = \frac{F(x) + F(y)}{2}. 
    \end{align*}
    In any case, we have the desired strict inequality.

    For the converse, suppose that $F$ is strictly convex and take $u, v \in \Rn$ such that $u \neq v$ and a scalar $0<t<1$. Define $w := tu + (1-t)v$. For a fixed Jordan frame $\{e_1, \ldots, e_n\}$, define
    \[ x := \sum_{i=1}^{n} u_i e_i, \quad y := \sum_{i=1}^{n} v_i e_i, \quad \text{and} \quad z := \sum_{i=1}^{n} w_i e_i. \]
    It is easy to see that $x \neq y$ and $z = tx + (1-t)y$. Since $\lambda(z)$ is some permutation of $w$ and $f$ is symmetric, we have $F(z) = f(\lambda(z)) = f(w)$. Thus,
    \begin{align*}
        f \big( tu + (1-t)v \big) = f(w) = F(z) 
        &= F(tx + (1-t)y) \\
        & < tF(x) + (1-t)F(y) = tf(u) + (1-t)f(v).
    \end{align*}
    This completes the proof.
\end{proof}

\begin{proposition} \label{prop:norm}
    Let $ F :\VV\to\mathbb{R}$ be a spectral function with $f:\Rn\to\mathbb{R}$ as its associated symmetric function so that $F = f \circ \lambda$. In this setting, $F$ is a strictly convex norm on $\VV$ if and only if $f$ is a strictly convex norm on $\Rn$. 
\end{proposition}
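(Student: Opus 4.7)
The plan parallels Proposition\,\ref{prop:strictconvexity}, with the key difference that Lemma\,\ref{lem:strictSchur} is no longer available (it requires $f$ strictly convex in the usual sense, whereas here $f$ is only a strictly convex \emph{norm}); a Birkhoff-type decomposition will substitute for it.

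First I would verify that the norm axioms transfer back and forth between $F$ and $f$: positive definiteness and absolute homogeneity follow immediately from $F=f\circ\lambda$, the fact that $\lambda(\alpha x)$ is either $\alpha\lambda(x)$ or its reversal, and symmetry of $f$. The triangle inequality for $F$ is obtained from $\lambda(x+y)\prec\lambda(x)+\lambda(y)$ together with Schur-convexity of any symmetric convex function, while the triangle inequality for $f$ is obtained via the fixed-Jordan-frame construction $x=\sum u_ie_i$, $y=\sum v_ie_i$, exactly as in Proposition\,\ref{prop:strictconvexity}.

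For the strict convexity of the norm, the forward implication is the heart of the argument. Assuming $f$ is a strictly convex norm, take distinct nonzero $x,y\in\VV$ with $F(x)=F(y)=1$ and suppose toward a contradiction that $F(x+y)=2$. Chaining
\[ f(\lambda(x+y))\;\leq\;f(\lambda(x)+\lambda(y))\;\leq\;f(\lambda(x))+f(\lambda(y))\;=\;2 \]
forces both inequalities to be equalities. The second equality, together with the fact that equality in the triangle inequality of a strictly convex norm forces positive collinearity of the summands, combined with the normalization $f(\lambda(x))=f(\lambda(y))=1$, yields $\lambda(x)=\lambda(y)=:\lambda$. The first equality then reads $f(\lambda(x+y))=f(2\lambda)=2$ with $\lambda(x+y)\prec 2\lambda$; decomposing $\lambda(x+y)$ as a convex combination of permutations of $2\lambda$ via Birkhoff, all those permutations lie on the $f$-sphere of radius $2$ and so does their convex combination, so the strict convexity of the norm (which forbids any nontrivial convex combination of equinorm points from landing again on the same sphere) forces the permutations appearing with positive weight to coincide. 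Since $\lambda(x+y)$ is in non-increasing order and $2\lambda$ is also sorted, this gives $\lambda(x+y)=2\lambda=\lambda(x)+\lambda(y)$. Hence $x$ and $y$ strongly operator commute, so they admit simultaneous decomposition $x=\sum_i\lambda_i(x)e_i$, $y=\sum_i\lambda_i(y)e_i$ in a common Jordan frame; since $\lambda(x)=\lambda(y)$, we obtain $x=y$, contradicting $x\neq y$.

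The reverse implication is comparatively immediate: given distinct $u,v\in\Rn$ with $f(u)=f(v)=1$, fix a Jordan frame $\{e_1,\ldots,e_n\}$ and define $x=\sum u_ie_i$, $y=\sum v_ie_i$. Then $x\neq y$, and since $\lambda(x)$ is a permutation of $u$, symmetry of $f$ gives $F(x)=f(\lambda(x))=f(u)=1$ and likewise $F(y)=1$; as $\lambda(x+y)$ is a permutation of $u+v$, strict convexity of $F$ yields $f(u+v)=F(x+y)<2$. The main technical obstacle is the step $\lambda(x+y)=2\lambda$ in the forward direction: Lemma\,\ref{lem:strictSchur} cannot be invoked here, and the combination of Birkhoff's decomposition with the equinorm characterization of strict convexity of a norm is precisely where the distinction between "strictly convex function" and "strictly convex norm" must be confronted.
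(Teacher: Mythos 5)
Your proof is correct and follows the same overall blueprint as the paper---majorization $\lambda(x+y)\prec\lambda(x)+\lambda(y)$ together with Schur-convexity in the forward direction, and the fixed-Jordan-frame construction $x=\sum u_ie_i$, $y=\sum v_ie_i$ for the converse---but it is more careful at two points where the paper's argument has gaps. In the forward direction the paper passes from $F(x)=F(y)=1$ directly to $f(\lambda(x)+\lambda(y))<2$ by strict convexity of $f$, which tacitly assumes $\lambda(x)\neq\lambda(y)$; distinct $x,y\in\VV$ can of course share a spectrum (two distinct primitive idempotents of a Jordan frame already do), and the paper never treats that case. Your Birkhoff decomposition fills exactly this hole: when both inequalities in the chain are tight, the strict convexity of the $f$-ball forces every positively weighted permutation of $2\lambda$ to coincide, so $\lambda(x+y)=\lambda(x)+\lambda(y)$, hence strong operator commutativity in a common frame and finally $x=y$, the contradiction. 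This is indeed the right substitute for Lemma\,\ref{lem:strictSchur}, which needs strict convexity of $f$ as a function, not merely as a norm. In the converse direction your observation that $\lambda(x+y)$ is just a permutation of $u+v$, so $f(u+v)=f(\lambda(x+y))=F(x+y)<2$ directly, is the clean route; the paper instead asserts that $x$ and $y$ built from unsorted $u,v$ strongly operator commute---which is false in general, e.g.\ $u=(1,0)$, $v=(0,1)$ give $x=e_1$, $y=e_2$ with $\lambda(x+y)=(1,1)\neq(2,0)=\lambda(x)+\lambda(y)$---and then detours through $(u+v)^\downarrow\prec u^\downarrow+v^\downarrow$, a step your argument makes unnecessary. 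Same skeleton, but your handling of the degenerate cases is the one that actually closes both implications.
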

\begin{proof}
    It is not difficult to show that $F$ is a norm on $\VV$ if and only if $f$ is a norm on $\Rn$. Thus, it is left to show that (norm) strict convexity is transferred between $F$ and $f$. 
    
    For the rest of the proof, assume first that $f$ is a strictly convex norm on $\Rn$. Let $x, y \in \VV$ be such that $x \neq y$ and $F(x) = F(y) = 1$. Consider first the case that $\lambda(x) \neq \lambda(y)$. Since $f(\lambda(x)) = f(\lambda(y)) = 1$ and $f$ is (norm) strictly convex, we see that
    \begin{equation} \label{eq:strictlyconvexnorm_1}
        f (\lambda(x) + \lambda(y)) < 2.
    \end{equation}
    Since $f$ is symmetric and convex as a function, it is Schur-convex. Thus, Schur-convexity of $f$ together with \eqref{eq:majorization_sum} implies
    \[ F(x + y) = f(\lambda(x + y)) \leq f(\lambda(x) + \lambda(y)) < 2. \]
    In the case that $\lambda(x) = \lambda(y)$, we note that $x$ and $y$ cannot strongly operator commute; hence \eqref{eq:assumption2} holds. From \eqref{eq:majorization_sum} and \eqref{eq:assumption2} we have $f(\lambda(x+y)) < f(\lambda(x) + \lambda(y))$ because $f$ is strictly Schur-convex as stated in Lemma\,\ref{lem:strictSchur}. Thus,
    \[ F(x + y) = f(\lambda(x+y)) < f(\lambda(x) + \lambda(y)) = f(2 \lambda(x)) = 2 f(\lambda(x)) = 2. \]
    Considering all the cases, we deduce that $F$ is a strictly convex norm on $\VV$.

    Conversely, assume that $F$ is a strictly convex norm on $\VV$. Take $u, v \in \Rn$ such that $u \neq v$ and $f(u) = f(v) = 1$. For a fixed Jordan frame $\{e_1, \ldots, e_n\}$, define
    $ x = \sum_{i=1}^{n} u^{\downarrow}_i e_i$ and $ y = \sum_{i=1}^{n} v^{\downarrow}_i e_i. $
    Since $\lambda(x)$ is some permutation of $u$ and $f$ is symmetric, we have $F(x) = f(\lambda(x)) = f(u) = 1$ and similarly, we have $F(y) = 1$.
    Also, note that $x$ and  $y$ strongly operator commute by our construction. Thus, we see that
    \[ \lambda(x+y) = \lambda(x) + \lambda(y) = u^{\downarrow} + v^{\downarrow}. \]
    Now, we have that $(u + v)^{\downarrow} \prec u^{\downarrow} + v^{\downarrow}$ (cf. \cite[Theorem 6.A.1.c]{MOA}). Then, the Schur-convexity of $f$ implies
    $ f((u+v)^{\downarrow}) \leq f(u^{\downarrow} + v^{\downarrow}). $
    It follows that
    \[ f(u + v) = f((u+v)^{\downarrow}) \leq f(u^{\downarrow} + v^{\downarrow}) = f(\lambda(x+y)) = F(x + y) < 2, \]
    where the last inequality follows from the norm strict convexity of $F$ together with the fact that $x \neq y$ and $F(x) = F(y) = 1$. This completes the proof.
\end{proof}

\begin{proof}[Proof of Lemma\,\ref{monot}.] Recall that $F=f\circ\lambda$.
     Let $x\in \VV$ and $v\in\partial F(x)$. From \eqref{subdifspec} and from the subdifferential definition \eqref{equal} we get
    \begin{equation}\label{subdiff}
        \ip{\lambda(v)}{u - \lambda(x)} \leq f(u) - f(\lambda(x)),
    \end{equation}
    for all $u\in\Rn$. Consider indices $i$ and $j$ with $\lambda_i(x) > \lambda_j(x)$. Let $P$ denote a permutation matrix that interchanges the $i$ and $j$ entries in $\lambda(x)$ and leaves all other entries intact. Put $\mu(x) := P \lambda(x)$ so that $\lambda(x)$ and $\mu(x)$ differ only in the $i$ and $j$ entries; hence neither of which is a positive multiple of the other. We now claim that
    \begin{equation}\label{strict ineq}
         \ip{\lambda(v)}{\mu(x) - \lambda(x)} < 0.
    \end{equation}
    Indeed, if $F$ is strictly convex, then the inequality \eqref{subdiff} holds strictly for $u \in \Rn$ with $u\neq \lambda(x)$ and, in particular, for $u = \mu(x)$. Then, by replacing this value in \eqref{subdiff} and because of $f(\mu(x))=f(\lambda(x))$, we obtain \eqref{strict ineq}. 
    
    Suppose now that $F$ is a strictly convex norm. Since $\mu(x)$ and $\lambda(x)$ are nonzero and neither of which is a positive multiple of the other, the unit vectors $\mu(x)/f(\lambda(x))$ and $\lambda(x)/f(\lambda(x))$ are distinct. Hence, since $f$ is a strictly convex norm (cf. Proposition\,\ref{prop:norm}), we deduce that
    \begin{equation}\label{nstrict}
        f(\mu(x) + \lambda(x))<2f(\lambda(x)).
    \end{equation}
    By substituting $u = \frac{1}{2}\mu(x) + \frac{1}{2}\lambda(x)$ in \eqref{subdiff} and by combining the ensuing inequality with \eqref{nstrict}, we obtain \eqref{strict ineq}. The claim follows. Now, the strict inequality \eqref{strict ineq} is equivalent to $(\lambda_i(x)-\lambda_j(x))(\lambda_i(v)-\lambda_j(v))>0$. Therefore, because of $\lambda_i(x)>\lambda_j(x)$, we conclude $\lambda_i(v)>\lambda_j(v)$.
\end{proof}

\begin{proof}[Proof of Proposition\,\ref{prop:transitive}.]
    Since $a$ and $b$ strongly operator commute, there exists a common Jordan frame $\{e_1, \ldots, e_n\}$ such that
    \begin{equation}\label{comuv}
        a = \sum_{i=1}^n \lambda_i(a) e_i \quad \text{and} \quad b = \sum_{j=1}^n \lambda_j(b) e_j.
    \end{equation}
    Let $\alpha_1 > \alpha_2> \cdots > \alpha_p$ be the distinct values of $\lambda_1(a), \ldots, \lambda_n(a)$, and let $\beta_1 > \beta_2 > \cdots > \beta_q$ be the distinct values of $\lambda_1(b), \ldots, \lambda_n(b)$. From these distinct eigenvalues of $u$ and $v$, we can construct two partitions of $\{1, \ldots, n\}$ as follows: 
    \[ \mathcal{I} := \{I_1, \ldots, I_p \} \quad \text{and} \quad \mathcal{J} := \{ J_1, \ldots, J_q \}, \]
    where $I_r = \set{1 \leq i \leq n}{\lambda_i(a) = \alpha_r}$ for all $r = 1, \ldots, p$, and $J_s = \set{1 \leq j \leq n}{\lambda_j(b) = \beta_s}$ for all $s = 1, \ldots, q$. Write $E_r = \sum_{i \in I_r} e_i$ for all $r = 1, \ldots, p$, and $F_s = \sum_{j \in J_s} e_j$ for all $s = 1, \ldots, q$. Then, \eqref{comuv} can be rewritten as
    \begin{equation} \label{spectraluniq}
        a = \sum_{r=1}^{p} \alpha_r E_r \quad \text{and} \quad 
        b = \sum_{s=1}^{q} \beta_s F_s.
    \end{equation}
    From the assumption that $\lambda_i(a) > \lambda_j(a)$ implies $\lambda_i(b) > \lambda_j(b)$ for all $i,j$, we see that $p \leq q$ and the partition $\mathcal{J}$ is finer than the partition $\mathcal{I}$. Furthermore, each set of the partition $\mathcal J$ is contained in some set of the partition $\mathcal I$. Thus, there exists a partition $\{T_1, \ldots, T_p\}$ of $\{1, \ldots, q\}$ such that for every $r = 1, \ldots, p$,
    $ \sum_{s \in T_r} F_s = E_r. $
    Now, since $b$ and $c$ operator commute, there exists a Jordan frame $\{e'_1, \ldots, e'_n\}$ and scalars $\gamma_1,\ldots,\gamma_n$ such that 
    \begin{equation} \label{commxv}
        b = \sum_{i=1}^n \lambda_i(b) e'_i \quad \text{and} \quad c = \sum_{i=1}^n \gamma_j e'_j.
    \end{equation}
    Observe that we have two different-looking ways of writing the spectral decomposition of $b$ as in \eqref{comuv} and \eqref{commxv}. However, since the representation of $b$ in \eqref{spectraluniq} is unique (cf. \cite[Theorem\,III.1.1]{FK}), we must have
    \[ F_s = \sum_{j \in J_s} e_j = \sum_{j \in J_s} e'_j. \]
    Hence, $a$ can be written as
    \begin{equation} \label{domposea}
        a = \sum_{r=1}^{p} \alpha_r E_r = \sum_{r=1}^{p} \alpha_r \sum_{s \in T_r} F_s = \sum_{r=1}^{p} \alpha_r \sum_{s \in T_r} \sum_{j \in J_s} e'_j = \sum_{r=1}^{p} \sum_{s \in T_r} \sum_{j \in J_s} \alpha_r e'_j.
    \end{equation}
    Since the right-hand side of \eqref{domposea} uses all the $e'_j$s exactly once, it is a spectral decomposition of $a$. Thus, $a$ and $c$ can be decomposed simultaneously with a common Jordan frame.  Hence, $a$ and $c$ operator commute.
\end{proof}

\end{document}